\documentclass[reqno]{amsart}
\usepackage[svgnames,dvipsnames]{xcolor}
\usepackage{geometry}
\usepackage[english]{babel}
\usepackage{latexsym, mathrsfs, verbatim}
\usepackage{amsfonts, amsthm, amsmath, amssymb, bm}
\usepackage{hyperref}
\usepackage{amsaddr}
\usepackage[nosepfour,autolanguage]{numprint}
\usepackage[T1]{fontenc}
\usepackage[utf8]{inputenc} 

\usepackage[linesnumbered,ruled]{algorithm2e}

\usepackage{tikz}
\usetikzlibrary{mindmap,backgrounds,shapes,calc,intersections,rulercompass}

\usepackage{subcaption}

\newtheorem{prop}{Proposition}[section]
\newtheorem{lem}[prop]{Lemma}
\newtheorem{cor}[prop]{Corollary}
\newtheorem{thm}[prop]{Theorem}

\theoremstyle{definition}
\newtheorem{rem}[prop]{Remark}
\newtheorem{defi}[prop]{Definition}
\newtheorem{ex}[prop]{Example}
\newtheorem{prob}[prop]{Problem}

\def\ga{\bm{\gamma}}

\def\SRG{\textsc{srg}}
\def\DDG{\textsc{ddg}}
\def\diam{\mathrm{diam}}
\def\c{\mathrm{c}}

\numberwithin{equation}{section}

 \usepackage[normalem]{ulem}
 \usepackage{datetime}

\hypersetup{ 
 colorlinks =true,
 linkcolor=Sepia,
 citecolor=Plum,
 urlcolor=olive,
 }

\begin{document}

\title[A graph reconstruction problem involving common neighbors]{A graph reconstruction problem \\ involving common neighbors}
\author[M. Ascolese]{Michela Ascolese}

\author[P. Negrini, S.M.C. Pagani \and M.A. Pellegrini]{Pietro Negrini, Silvia Maria Carla Pagani \\ \and Marco Antonio Pellegrini}
\address{Dipartimento di Matematica e Fisica, Universit\`a Cattolica del Sacro Cuore,\\
Via della Garzetta 48, 25133 Brescia, Italy}
\email{pietro.negrini@unicatt.it}
\email{silvia.pagani@unicatt.it}
\email{marcoantonio.pellegrini@unicatt.it}

\begin{abstract}
Given a simple graph $G = (V, E)$ on $v$ vertices and two distinct vertices $x, y \in  V$, the co-degree $c_{x,y}$ associated to the pair $\{x, y\}$ is the number of their common neighbors in the graph $G$.
The \emph{co-degree sequence} of $G$, denoted by $\ga(G)$, is the list of all the co-degrees associated to all the possible pairs of distinct vertices, arranged in non-increasing order.
In this paper we consider the following problem, which can be viewed as a generalization of a result by Erd\H{o}s and Gallai as well as of the Erd\H{o}s, Rényi and Sós' friendship theorem: 
given an integer $v\geqslant 2$ and a sequence $\ga$  of nonnegative integers arranged in non-increasing order,
establish if there exists a simple graph on $v$ vertices having $\ga$ as its co-degree sequence and, in case of positive answer, provide such a graph.
We provide a full answer to this problem for the class of planar $C_4$-free graphs. 
\end{abstract}

\keywords{Common neighbor; co-degree; Deza graph; tree; $4$-cycle-free graph; planar graph.}
\subjclass[2020]{05C07; 05C05; 05C09} 
\maketitle

\section{Introduction}

A simple graph $G$ is given by a pair $(V, E)$, where the elements of $V$ are called vertices, while $E$ is a set of edges, i.e., unordered pairs $\{x,y\}$ of distinct vertices.
In this paper, we only consider simple graphs with a finite number $v$ of vertices.

The \emph{degree} $\deg_G(x)$ of a vertex $x$ is defined as the number of edges in which the vertex appears, while the vector $\bm{\pi} = (d_1 , \ldots, d_v )$ of all
degrees, arranged in non-increasing order,  is called the degree sequence of the graph $G$. 
One can ask when a sequence of positive integers may be the degree sequence of a simple graph $G$.
In this case, we say that $G$ realizes $\bm{\pi}$. This question was answered by Erd\H{o}s and Gallai in 1960 \cite{EG} (several other equivalent characterizations were provided, see \cite{SH} for a survey).

In this paper we consider a similar problem, based on the concept of \emph{co-degree}.
Given a simple graph $G = (V, E)$ on $v$ vertices and two distinct vertices $x, y \in  V$, the co-degree $c_{x,y}$ associated to the pair $\{x, y\}$ is the number of their common neighbors in the graph $G$: $c_{x,y} = |N_G (x) \cap N_G (y)|$, where $N_G(z)$ denotes the \emph{neighborhood} of a vertex $z\in V$, namely, the set of vertices adjacent to $z$.
The \emph{co-degree sequence} of $G$, denoted by $\ga(G)$, is the list of all the co-degrees associated to all the possible pairs of distinct vertices, arranged in non-increasing order. 
Its length is $\binom{v}{2}$, and it contains null entries if and only if at least one a pair of vertices does not share any neighbor.
The co-degrees can be computed also from the adjacency matrix $A$ of the graph $G$. In fact, $c_{x,y}$ is the entry of $A^2$ corresponding to the pair of vertices $\{x,y\}$.

An interesting and challenging problem is  to understand how much information about a simple graph can be obtained starting from its co-degree sequence.

\begin{prob}\label{problema}
Given an integer $v\geqslant 2$ and a sequence $\ga$ of length $\binom{v}{2}$ of nonnegative integers arranged in non-increasing order, establish if there exists a simple graph on $v$ vertices having $\ga$ as its co-degree sequence and, in case of positive answer, provide such a graph.
\end{prob}

After having highlighted in Section~\ref{deza} some connections between our problem and important graph classes, such as strongly regular graphs, Deza graphs and  $(0,\lambda)$-graphs, in Section~\ref{free4} we focus our attention on $C_4$-free graphs.
In particular, in Section~\ref{alberi} we address the problem for the family of trees on $v$ vertices. 
Section~\ref{planari} is devoted to the solution of Problem~\ref{problema} for $C_4$-free planar graphs. Finally, Section \ref{sec:conclusioni} proposes some further work and concludes the paper.

\subsection*{Notation}

Given $c\in \{1,2\}$ and two integers $a \leqslant b$ such that $a\equiv b \pmod c$, we denote by $[a, b]_c$ the set consisting of the integers $a, a + c, a+2c, \ldots, b$. 
If $a > b$, then $[a, b]_c$ is empty. 
We simply write $[a,b]$ for $[a,b]_1$.
If a sequence $L$ of integers consists of $a_0$ $0$'s, $a_1$ $1$'s, \ldots, $a_k$ $k$'s, we will write 
$L = \left( k^{a_k}, \ldots, 1^{a_1} , 0^{a_0}\right)$, often omitting the values with exponent zero.
A sequence $(\alpha_1,\alpha_2,\ldots,\alpha_k)$ of positive integers is said to be a partition of an integer $m$ if $\alpha_1\geqslant \alpha_2\geqslant \cdots \geqslant \alpha_k$ and $\sum\limits_{i=1}^k \alpha_i =m$.

We denote by $K_v$ the complete graph on $v$ vertices, and by $K_{a,b}$ the complete bipartite graph consisting of two parts of respective cardinality $a$ and $b$.
Given a simple graph $G$, $\diam(G)$ will denote its diameter and $\Delta(G)$ will denote its maximum degree:
$\Delta(G)=\max\limits_{x \in V} \deg_G(x)$.

\section{Co-degrees and notable classes of graphs}\label{deza}

In this section, we highlight some connections between co-degree  sequences and certain classes of graphs. In particular, we consider graphs whose co-degree sequence consists of at most two different integers.

\begin{defi}\cite{AS}\label{la}
A simple graph $G=(V,E)$ is said to be a \emph{symmetric $(v, k, \lambda)$ graph}, where  $\lambda, v ,k$ are three positive integers such that $\lambda < k < v$, if the following conditions are satisfied:
\begin{itemize}
\item[$(1)$] $G$ is  $k$-regular on $v$ vertices;
\item[$(2)$] every pair of vertices has exactly $\lambda$ common neighbors.
\end{itemize}
\end{defi}

If $V = \{x_1,\ldots,x_v\}$ and $S_i=N_G(x_i)$,  $i = 1,\ldots, v$, then the sets $S_i$ form a symmetric block design with parameters $(v, k, \lambda)$, i.e., $|S_i| = k$ and $|S_i \cap S_j|=\lambda$  for $i\neq j$.
As remarked in \cite{R}, the study of symmetric $(v,k,\lambda)$ graphs is equivalent to the study of $(v,k,\lambda)$ designs  admitting polarities with no absolute points. 
As follows for instance from \cite{ERS}, the only symmetric $(v, k, 1)$ graph is the complete graph $K_3$. 
Considering only Condition $(2)$ of Definition~\ref{la} (i.e., dropping the hypothesis of $k$-regularity), Erd\H{o}s, Rényi and Sós proved the following result.

\begin{thm}\cite{ERS}\label{friendship}
If $G$ is a simple graph on $v$ vertices in which any two vertices have exactly one common neighbor, then $v$ is odd and $G$ consists of $\frac{v-1}{2}$ triangles which have one common vertex.
\end{thm}

The graph described in the previous theorem is the so called \emph{friendship graph}, see Figure~\ref{amici}.

\begin{figure}[ht]
\begin{tikzpicture}[scale = 0.3]

\draw 
  node (p) [
    minimum size=3cm,
    regular polygon, 
    regular polygon sides=10
  ] {};
  
\foreach \n in {1,2,...,10}{
 \node[anchor=\n*(360/10)] at (p.corner \n) {};
}
\coordinate (A) at (p.corner 1);
\coordinate (B) at (p.corner 2);
\coordinate (C) at (p.corner 3);
\coordinate (D) at (p.corner 4);
\coordinate (E) at (p.corner 5);
\coordinate (F) at (p.corner 6);
\coordinate (G) at (p.corner 7);
\coordinate (H) at (p.corner 8);
\coordinate (I) at (p.corner 9);
\coordinate (J) at (p.corner 10);
\coordinate (O) at (0,0);

\draw (O)--(A)--(B)--(O)--(C)--(D)--(O)--(E)--(F)--(O)--(G)--(H)--(O)--(I)--(J)--(O);

\draw[fill] (A) circle (3pt);
\draw[fill] (B) circle (3pt);
\draw[fill] (C) circle (3pt);
\draw[fill] (D) circle (3pt);
\draw[fill] (E) circle (3pt);
\draw[fill] (F) circle (3pt);
\draw[fill] (G) circle (3pt);
\draw[fill] (H) circle (3pt);
\draw[fill] (I) circle (3pt);
\draw[fill] (J) circle (3pt);
\draw[fill] (O) circle (3pt);

\end{tikzpicture}
\caption{The friendship graph on $v=11$ vertices.}\label{amici}
\end{figure}

On the other hand, Condition $(2)$ of Definition~\ref{la}  for $\lambda\geqslant 2$ implies the regularity of $G$, as shown by Bose and  Shrikhande.

\begin{lem}\cite{BS}
Let $G$ be a simple graph on $v$ vertices in which each pair of distinct vertices is adjacent to exactly $\lambda$ other vertices, $\lambda\geqslant 2$.
Then $G$ is regular of valence $k$ such that $v-1=\frac{k(k-1)}{\lambda}$ and there exists a positive integer $m$ such that $k = \lambda+m^2$ and $\frac{\lambda}{m}$ is an integer with the same parity as $v-1-m$.
\end{lem}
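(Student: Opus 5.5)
The plan follows the classical Bose--Shrikhande argument, which is a combination of double counting and the spectral theory of the adjacency matrix $A$.

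\textbf{Regularity.} Fix a vertex $x$ of degree $k_x$. Count pairs $(y,z)$ with $y\in N_G(x)$, $z\neq x$, $z\in N_G(y)$. Every $z\neq x$ has exactly $\lambda$ common neighbours with $x$, so the count is $\lambda(v-1)$. Counting by $y$ instead gives $\sum_{y\in N_G(x)}(\deg y-1)$.

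This alone does not force regularity, so I would use non-adjacent pairs. If $x\not\sim y$, then $N_G(x)\cap N_G(y)$ has size $\lambda$. Counting the edges from $N_G(x)$ to $N_G(y)$ through common neighbours shows $\deg x=\deg y$. More precisely, each $w\in N_G(x)$ has $\lambda$ neighbours in $N_G(y)$ if $w\ne y$. Since $y\notin N_G(x)$, the number of pairs $(w,u)$ with $w\in N(x)$, $u\in N(y)$, $w\sim u$ equals $\lambda\deg x$, and by symmetry also $\lambda \deg y$, up to correction terms from $w=u$. I expect these cancel. When $x\sim y$ the vertices are adjacent, and one needs a third vertex non-adjacent to both. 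If $\lambda\ge 2$ rules out the friendship-type exception, transitivity of equal degrees along non-adjacency gives regularity. This is the delicate step, because the complement of $G$ may be disconnected. The case where it is disconnected must be handled by noting that a vertex adjacent to all others forces, via $\lambda\ge2$, a structure that is again regular; the friendship graph appears exactly when $\lambda=1$.

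\textbf{Arithmetic conditions.} Once $G$ is $k$-regular, we have $A^2=(k-\lambda)I+\lambda J$. Applying this to the all-ones vector gives $k^2=k-\lambda+\lambda v$, that is, $v-1=k(k-1)/\lambda$.

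The remaining eigenvalues satisfy $\theta^2=k-\lambda$, so $\theta=\pm\sqrt{k-\lambda}$ with multiplicities $f,g$, where $f+g=v-1$. Since $\operatorname{tr}A=0$, we get $k+(f-g)\sqrt{k-\lambda}=0$. As $k>0$, we have $f\ne g$, so $\sqrt{k-\lambda}$ is rational, hence an integer $m$, and $k=\lambda+m^2$.

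Then $g-f=k/m=\lambda/m+m$, so $\lambda/m$ is an integer. Finally, $g-f\equiv g+f=v-1 \pmod 2$ gives $\lambda/m+m\equiv v-1$. Equivalently, $\lambda/m\equiv v-1-m\pmod 2$.

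The main obstacle is the regularity step; the spectral part is routine.
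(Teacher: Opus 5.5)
The paper gives no proof of this lemma; it is quoted from Bose and Shrikhande, so your proposal has to stand on its own. The spectral half is correct. Once $G$ is $k$-regular, $A^2=(k-\lambda)I+\lambda J$ gives $v-1=k(k-1)/\lambda$. The trace identity $k=(g-f)\sqrt{k-\lambda}$ with $k\geqslant\lambda\geqslant 2$ makes $m=\sqrt{k-\lambda}$ a positive integer. Then $g-f=\lambda/m+m\equiv g+f=v-1 \pmod 2$ gives the parity condition.

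Your first regularity step is also sound and needs no hedging. If $x\not\sim y$, every $w\in N_G(x)$ differs from $y$. So the number of ordered pairs $(w,u)$ with $w\in N_G(x)$, $u\in N_G(y)$, $w\sim u$ is exactly $\lambda\deg_G(x)$ counted from one side and $\lambda\deg_G(y)$ from the other. There is no correction term, since $w\sim u$ forces $w\neq u$. Hence the degree is constant on each connected component of the complement $\overline{G}$, and $G$ is regular whenever $\overline{G}$ is connected.

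The gap is the remaining case, $\overline{G}$ disconnected, and that is the whole difficulty. You assert that some vertex is then adjacent to all others, but disconnectedness of $\overline{G}$ only says that $G$ is a join: $V=V_1\sqcup V_2$ with every vertex of $V_1$ adjacent to every vertex of $V_2$. If $|V_1|,|V_2|\geqslant 2$, nothing you wrote produces a dominating vertex. The hypothesis $\lambda\geqslant 2$ must be used somewhere, since the friendship graph is a non-regular join for $\lambda=1$, yet you never invoke it. Even the dominating-vertex case is only asserted. It is easy: $G-z$ is $\lambda$-regular with all co-degrees $\lambda-1$, so adjacent vertices have equal closed neighbourhoods, and since $\lambda-1\geqslant 1$ this gives $G=K_{\lambda+2}$.

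A direct fix is to treat the join itself. Let $G_i$ be the subgraph induced on $V_i$ and $n_i=|V_i|$.
\begin{itemize}
\item For $x\in V_1$, $y\in V_2$, the common neighbours are exactly $N_{G_1}(x)\cup N_{G_2}(y)$, so $\deg_{G_1}(x)+\deg_{G_2}(y)=\lambda$ for all such pairs.
\item Hence $\deg_{G_i}$ is a constant $k_i$ on $V_i$ with $k_1+k_2=\lambda$, and $G$ has degree $d_1=k_1+n_2$ on $V_1$ and $d_2=k_2+n_1$ on $V_2$.
\item Apply your own count $\sum_{w\in N_G(x)}(\deg_G(w)-1)=\lambda(v-1)$ at $x\in V_1$ and at $y\in V_2$, and subtract. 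This gives $(k_1-n_1)(d_1-1)=(k_2-n_2)(d_2-1)$, that is $(d_1-v)(d_1-1)=(d_2-v)(d_2-1)$, i.e. $(d_1-d_2)(d_1+d_2-v-1)=0$.
\item Since $d_1+d_2-v-1=\lambda-1\neq 0$, you get $d_1=d_2$, and $G$ is regular. This is exactly where $\lambda\geqslant 2$ enters.
\end{itemize}
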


We can reformulate the previous results in terms of co-degree sequences. Clearly, the co-degree sequence of a symmetric $(v,k,\lambda)$ graph is $\ga=\left(\lambda^{\binom{v}{2}}\right)$.

\begin{prop}
Let $G$ be a simple graph on $v\geqslant 3$ vertices. Then,
\begin{itemize}
\item[$(1)$] $\ga(G)=\left(1^{\binom{v}{2}}\right)$ if and only if $v$ is odd and $G$ is the  friendship graph;
\item[$(2)$]  $\ga(G)=\left(\lambda^{\binom{v}{2}}\right)$ with $\lambda \geqslant 2$ if and only if $G$ is a symmetric $(v,k,\lambda)$ graph, where $v-1=\frac{k(k-1)}{\lambda}$;
\item[$(3)$] $\ga(G)=\left((v-2)^{\binom{v}{2}}\right)$ if and only if $G$ is the complete graph $K_v$.
\end{itemize}
\end{prop}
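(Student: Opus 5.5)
The plan is to treat the three items separately, each time rewriting the condition on $\ga(G)$ as a statement about the common neighbors of every pair. The key observation is that $\ga(G)=\left(\lambda^{\binom{v}{2}}\right)$ holds if and only if $c_{x,y}=\lambda$ for every pair of distinct vertices $x,y$. This is immediate, since the sequence lists all $\binom{v}{2}$ co-degrees.

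For item $(1)$, the condition says that any two vertices have exactly one common neighbor. Theorem~\ref{friendship} then gives that $v$ is odd and $G$ is the friendship graph. For the converse, let $G$ be the friendship graph with center $O$ and check the pairs directly.
\begin{itemize}
\item If $x,y$ are two non-central vertices in the same triangle, their unique common neighbor is $O$. Each of them has only $O$ and the other as neighbors.
\item If $x,y$ are two non-central vertices in different triangles, their only common neighbor is again $O$.
\item If $x=O$ and $y$ is non-central, the unique common neighbor is the third vertex of $y$'s triangle.
\end{itemize}
Hence every co-degree equals $1$.

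For item $(2)$, the condition says that every pair has exactly $\lambda\geqslant 2$ common neighbors. The lemma of Bose and Shrikhande gives that $G$ is $k$-regular with $v-1=k(k-1)/\lambda$. We must also check that $\lambda<k<v$, as Definition~\ref{la} requires.
\begin{itemize}
\item $k<v$ holds trivially, since $k\leqslant v-1$.
\item $\lambda\leqslant k$ is clear, and equality is impossible: $k=\lambda$ would force $v-1=k-1$, i.e.\ $v=k$, which is absurd.
\end{itemize}
So $G$ is a symmetric $(v,k,\lambda)$ graph. The converse is just condition $(2)$ of the definition. The relation $v-1=k(k-1)/\lambda$ also follows from a direct count: count triples $(x,z,y)$ with $z$ adjacent to both $x$ and $y$, which gives $vk(k-1)=v(v-1)\lambda$.

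For item $(3)$, note first that $c_{x,y}\leqslant v-2$ always holds, because a common neighbor of $x,y$ is neither $x$ nor $y$. Equality means that every other vertex is adjacent to both $x$ and $y$. Given any pair $x\neq y$, pick a third vertex $z$, which exists since $v\geqslant 3$. Applying the equality to the pair $\{x,z\}$ shows that $y$ is adjacent to $x$. So $G=K_v$. Conversely, in $K_v$ every pair has all the other $v-2$ vertices as common neighbors.

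No real obstacle is expected, since the two cited results do the heavy lifting. The only points needing care are the verification that $\lambda<k$ in item $(2)$ and the use of $v\geqslant 3$ in item $(3)$.
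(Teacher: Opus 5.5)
Your proposal is correct and follows the paper's proof. Items $(1)$ and $(2)$ are reduced to the Erd\H{o}s--R\'enyi--S\'os theorem and the Bose--Shrikhande lemma, and item $(3)$ uses exactly the same third-vertex argument; your additional checks (that the friendship graph has all co-degrees equal to $1$, that $\lambda<k$, and the double count giving $k(k-1)=(v-1)\lambda$) simply fill in details the paper leaves implicit.
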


\begin{proof}
By the above, it suffices to prove item (3). Clearly, the co-degree sequence of the complete graph $K_v$  is $\left((v-2)^{\binom{v}{2}}\right)$. On the other hand, let $x$ and $y$ be two any distinct vertices of a simple graph $G$ such that $\ga(G)=\left((v-2)^{\binom{v}{2}}\right)$. Take another vertex $z \in V \setminus \{x,y\}$, which exists since $v \geqslant 3$. Since $c_{x,z} = v-2$ and $G$ is simple, $x$ and $y$ are adjacent. As the choice of $x$ and $y$ was arbitrary, $G$ is the complete graph on $v$ vertices.
\end{proof}

We recall that a \emph{strongly regular graph} $\SRG(v,k,\lambda,\mu)$ is a $k$-regular graph $G$ on $v$ vertices such that, given two vertices $x$ and $y$, one has 
$$ c_{x,y}=\left\{\begin{array}{ll}
   \lambda   &  \text{if $x$ and $y$ are adjacent},\\
   \mu & \text{otherwise}.
 \end{array}\right.$$
So, a symmetric $(v,k,\lambda)$ graph is nothing but a $\SRG(v,k,\lambda,\lambda)$.
For  a strongly regular graph $\SRG(v,k,\lambda,\mu)$ one has
$$\ga=\left\{
\begin{array}{ll}
\left(\lambda^e, \mu ^{\binom{v}{2}-e}\right) & 
 \text{if } \lambda\geqslant 
        \mu,\\
    \left(\mu ^{\binom{v}{2}-e},\lambda^e\right) & 
        \text{otherwise},
    \end{array}\right.$$
where $e=\frac{vk}{2}$ is the number of its edges.
    
The concept of strongly regular graph can be generalized as follows, see \cite{EFHHH}.
Let $v$, $k$, $b$, $a$ be integers such that $0 \leqslant  a \leqslant b \leqslant  k < v$. 
A simple graph $G=(V,E)$ is called a \emph{Deza graph} with parameters $(v, k, b, a)$ if it has $v$ vertices, is $k$-regular and $\{c_{x,y}  : x,y\in V\}=\{a,b\}$.
Figure~\ref{C4free} shows a Deza graph with parameters $(12,3,1,0)$ and co-degree sequence $(1^{36},0^{30})$.

\begin{figure}[ht]
\centering
\begin{subfigure}{0.3\textwidth}
\centering
\begin{tikzpicture}[scale = 0.3]
\draw 
  node (p) [
    minimum size=1cm,
    regular polygon, 
    regular polygon sides=5
  ] {};
  
\foreach \n in {1,2,...,5}{
 \node[anchor=\n*(360/5)] at (p.corner \n) {};
}
\coordinate (A) at (p.corner 1);
\coordinate (B) at (p.corner 2);
\coordinate (C) at (p.corner 3);
\coordinate (D) at (p.corner 4);
\coordinate (E) at (p.corner 5);

\draw 
  node (q) [
    minimum size=3cm,
    regular polygon, 
    regular polygon sides=5
  ] {};
  
\foreach \n in {1,2,...,5}{
 \node[anchor=\n*(360/5)] at (p.corner \n) {};
}
\coordinate (F) at (q.corner 1);
\coordinate (G) at (q.corner 2);
\coordinate (H) at (q.corner 3);
\coordinate (I) at (q.corner 4);
\coordinate (J) at (q.corner 5);
\coordinate (K) at (2.5,-1.35);
\coordinate (L) at (-2.5,-1.35);

\draw (A)--(B)--(C)--(D)--(E)--(A)--(F)--(G)--(H)--(I)--(J)--(F);
\draw (C)--(I);
\draw (J)--(K)--(D);
\draw (K)--(E);
\draw (H)--(L)--(B);
\draw (G)--(L);

\draw[fill] (A) circle (3pt);
\draw[fill] (B) circle (3pt);
\draw[fill] (C) circle (3pt);
\draw[fill] (D) circle (3pt);
\draw[fill] (E) circle (3pt);
\draw[fill] (F) circle (3pt);
\draw[fill] (G) circle (3pt);
\draw[fill] (H) circle (3pt);
\draw[fill] (I) circle (3pt);
\draw[fill] (J) circle (3pt);
\draw[fill] (K) circle (3pt);
\draw[fill] (L) circle (3pt);

\end{tikzpicture}
\caption{$G_1$.}\label{C4free}
\end{subfigure}
\begin{subfigure}{0.3\textwidth}
\centering
\begin{tikzpicture}[scale = 0.3]

\draw 
  node (p) [
    minimum size=1.5cm,
    regular polygon, 
    regular polygon sides=5
  ] {};
  
\foreach \n in {1,2,...,5}{
 \node[anchor=\n*(360/5)] at (p.corner \n) {};
}
\coordinate (A) at (p.corner 1);
\coordinate (B) at (p.corner 2);
\coordinate (C) at (p.corner 3);
\coordinate (D) at (p.corner 4);
\coordinate (E) at (p.corner 5);

\draw[fill] (A) circle (3pt);
\draw[fill] (B) circle (3pt);
\draw[fill] (C) circle (3pt);
\draw[fill] (D) circle (3pt);
\draw[fill] (E) circle (3pt);

\draw (A)--(B)--(C)--(D)--(E)--(A)--(C);
\draw (A)--(D);
\end{tikzpicture}
\caption{$G_2$.}\label{221111}
\end{subfigure}
\begin{subfigure}{0.3\textwidth}
\centering
\begin{tikzpicture}[scale = 0.3]

\coordinate  (A) at (0,0);
\coordinate  (B) at (0,6);
\coordinate  (C) at (6,6);
\coordinate  (D) at (6,0);
\coordinate  (E) at (2,2);
\coordinate  (F) at (2,4);
\coordinate  (G) at (4,4);
\coordinate  (H) at (4,2);

\draw (A)--(B)--(C)--(D)--(A)--(E)--(F)--(G)--(H)--(E)--(D)--(H)--(A);
\draw (B)--(G)--(C)--(F)--(B);

\draw[fill] (A) circle (3pt);
\draw[fill] (B) circle (3pt);
\draw[fill] (C) circle (3pt);
\draw[fill] (D) circle (3pt);
\draw[fill] (E) circle (3pt);
\draw[fill] (F) circle (3pt);
\draw[fill] (G) circle (3pt);
\draw[fill] (H) circle (3pt);
\end{tikzpicture}
\caption{$G_3$.}\label{402}
\end{subfigure}

\caption{Three simple graphs whose co-degree sequence consists only of two distinct values.}
\end{figure}

We also recall the following definition, see \cite{HKM}.
A $k$-regular graph on $v$ vertices is a \emph{divisible design graph} ($\DDG$ for short) with parameters $( v , k , \lambda , \mu , m , n )$ if the vertex set can be partitioned into $m$ classes of size $n$, such that two distinct vertices from the same class have exactly $\lambda$ common neighbors, and two vertices from different classes have exactly $\mu$ common neighbors.
Clearly, a $\DDG$ with parameters $(v,k, \lambda, \mu, m,n)$ is a Deza graph whose co-degree sequence is
    $$\left\{\begin{array}{ll}
    \left( \lambda^{m\binom{n}{2}}, \mu^{n^2 \binom{m}{2}} \right) & 
    \text{if }\lambda \geqslant \mu,\\
     \left( \mu^{n^2 \binom{m}{2}}, \lambda^{m\binom{n}{2}} \right) & 
    \text{if }\lambda < \mu.
    \end{array}\right.$$

The following result is immediate.
\begin{prop}
    Let $G$ be a $k$-regular graph on $v$ vertices such that $\ga(G)=\left(\lambda^\ell, \mu ^{\binom{v}{2}-\ell}\right)$, for some positive integer $\ell \le \binom{v}{2}$. Then $G$ is a Deza graph with parameters $(v,k,\lambda,\mu)$.

    Moreover, if the vertex set can be partitioned into $m$ classes of size $n$ such that two distinct vertices from the same class have exactly $\lambda$ common neighbors, and two vertices from different classes have exactly $\mu$ common neighbors, then $G$ is a divisible design graph.
\end{prop}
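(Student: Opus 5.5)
The statement is essentially a direct unpacking of definitions, so the plan is to verify each defining condition in turn.

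For the first part, let $G$ be $k$-regular on $v$ vertices with $\ga(G)=\left(\lambda^\ell,\mu^{\binom{v}{2}-\ell}\right)$. Since $\ga(G)$ lists all co-degrees $c_{x,y}$ over unordered pairs of distinct vertices, the set $\{c_{x,y} : x,y\in V,\ x\neq y\}$ is contained in $\{\lambda,\mu\}$. It equals $\{\lambda,\mu\}$ when $\ell<\binom{v}{2}$, and equals $\{\lambda\}$ when $\ell=\binom{v}{2}$; in the latter case we regard the set as $\{\lambda,\mu\}$ with $\lambda=\mu$ allowed, as in the convention $a\leqslant b$ of the definition. The sequence is written in non-increasing order, so $\lambda\geqslant\mu$, and we set $b=\lambda$ and $a=\mu$.

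It remains to check the parameter constraints $0\leqslant a\leqslant b\leqslant k<v$. Nonnegativity is clear. A simple graph on $v$ vertices has degree at most $v-1$, so $k<v$. A co-degree satisfies $c_{x,y}=|N_G(x)\cap N_G(y)|\leqslant |N_G(x)|=k$, which gives $b\leqslant k$. Hence $G$ is a Deza graph with parameters $(v,k,\lambda,\mu)$.

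For the second part, we assume in addition a partition of $V$ into $m$ classes of size $n$ such that pairs inside a class have co-degree $\lambda$ and pairs in different classes have co-degree $\mu$. Together with $k$-regularity, this is literally the definition of a $\DDG$ with parameters $(v,k,\lambda,\mu,m,n)$, where $v=mn$.

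There is no real obstacle here. The only point needing care is the degenerate case $\lambda=\mu$ (equivalently $\ell=\binom{v}{2}$), where the set of co-degrees is a singleton. This case is handled by allowing $a=b$, as the definition does.
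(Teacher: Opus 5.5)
Your proposal is correct and takes the same approach as the paper, which only remarks that the result is immediate: checking $k$-regularity, that the co-degrees lie in $\{\lambda,\mu\}$ with $0\leqslant\mu\leqslant\lambda\leqslant k<v$, and that the partition hypothesis is verbatim the $\DDG$ definition is exactly the intended verification. Your reading of the degenerate case $\ell=\binom{v}{2}$ as $\mu=\lambda$ is a sensible clarification that the paper leaves implicit.
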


Note that the graph of Figure~\ref{221111} has co-degree sequence $(2^2,1^4)$, but it is not regular (so it is not a Deza graph). Still in the class of graphs whose co-degree sequence consists of at most two distinct values, we recall the following definition, given by Mulder.

\begin{defi}\cite{M}
    A connected loopless graph $G$ is called a \emph{$(0, \lambda)$-graph} if any two distinct vertices in $G$ have $\lambda \ge 2$ common neighbors or none at all.
\end{defi}

As shown in \cite[Proposition 1]{M}, a $(0, \lambda)$-graph is necessarily regular. Hence, the class of Deza graphs with parameters $(v, k, \lambda, 0)$, $\lambda \geqslant 2$, coincides with the class of $(0, \lambda)$-graphs. Figure~\ref{402} shows a $(0,2)$-graph which is $4$-regular and whose co-degree sequence is $(2^{24}, 0^4)$. 

We resume in the following proposition some of the results contained in \cite{M}.

\begin{prop}\cite[Propositions 4, 5, 6 and 8]{M}
Let $G$ be a $(0,\lambda)$-graph, for some $\lambda\geqslant 2$.
  \begin{itemize}
        \item[(1)] Suppose $\diam(G)=1$. Then $G$ is either $K_2$ or $K_{\lambda+2}$.
        \item[(2)] Suppose $\diam(G)=2$. If $G$ is bipartite or if $\Delta(G)=\lambda$, then $G$ is $K_{\lambda,\lambda}$.
        \item[(3)] Suppose $\diam(G)=2$. If $\lambda < \Delta(G) < 2\lambda$, then 
        $\ga(G)=\left(\lambda^{\binom{v}{2}}\right)$.
        \item[(4)] Suppose $\diam(G)=3$.
        If $\Delta(G) < 2\lambda$, then $G$ is bipartite.

    \end{itemize}
\end{prop}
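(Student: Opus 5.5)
The statement collects four results of \cite{M}, so I would prove the items one at a time. Throughout I would use \cite[Proposition 1]{M}, quoted above: $G$ is $k$-regular, so $\Delta(G)=k$. I would also use the defining dichotomy: for $x\neq y$, either $c_{x,y}=0$ or $c_{x,y}=\lambda$. In particular, every pair at distance $2$ has exactly $\lambda$ common neighbours.

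For item (1), $\diam(G)=1$ means $G=K_v$, so every co-degree equals $v-2$. Hence either $v-2=0$, giving $K_2$, or $v-2=\lambda$, giving $K_{\lambda+2}$.

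For the bipartite case of item (2), two vertices in different parts are at odd distance at most $2$, hence adjacent. So $G=K_{a,b}$, and regularity forces $a=b=k$. Two vertices in the same part share $k$ neighbours, so $k=\lambda$.

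For the case $\Delta(G)=\lambda$ of item (2), I would argue as follows.
\begin{itemize}
\item Two vertices at distance $2$ share $\lambda=k$ neighbours, so they have identical neighbourhoods.
\item An edge $xy$ cannot lie in a triangle. Otherwise $c_{x,y}=\lambda=k$, but $y\in N(x)\setminus N(y)$ gives $c_{x,y}\le k-1$.
\item Fix $x$ and let $D$ be the set of vertices at distance $2$ from $x$. By the diameter hypothesis, $V=\{x\}\cup N(x)\cup D$.
\item Every vertex of $\{x\}\cup D$ has neighbourhood $N(x)$, and $N(x)$ is independent.
\item Hence each vertex of $N(x)$ is adjacent to all of $\{x\}\cup D$ and to nothing else. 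Degree $k$ then forces $|\{x\}\cup D|=k$, so $G=K_{\lambda,\lambda}$.
\end{itemize}

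Item (3) is the main step. Assuming $\lambda<k<2\lambda$, I must show that no edge $xy$ has $c_{x,y}=0$.

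First, I would show that a bad edge propagates. Suppose $c_{x,y}=0$ and take $z\in N(x)\setminus\{y\}$. Then $z$ and $y$ are at distance $2$ with common neighbour $x$, so $z$ has $\lambda-1$ neighbours in $N(y)\setminus\{x\}$. If moreover $c_{x,z}=\lambda$, those $\lambda$ common neighbours lie in $N(x)\setminus\{y\}$, which is disjoint from $N(y)$. Then $\deg z\ge 1+(\lambda-1)+\lambda=2\lambda>k$, a contradiction. So every edge at $x$ has co-degree $0$, and symmetrically every edge at $y$. By connectivity, all edges have co-degree $0$, so $G$ is triangle-free.

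Next, I would derive a contradiction in the triangle-free case. Fix $x$ and let $D$ be as before; every neighbour of a vertex of $N(x)$ other than $x$ lies in $D$. Choose $w\in D$; it has exactly $\lambda$ neighbours in $N(x)$. Since $k>\lambda$, there is $z\in N(x)\setminus N(w)$. Then $z$ and $w$ are at distance $2$. Their common neighbours cannot be $x$, since $w\notin N(x)$. They also cannot lie in $N(x)$, since $z$ has no neighbours there by triangle-freeness. So the common neighbours lie in $D$, and $w$ has $\lambda$ neighbours in $D$ besides its $\lambda$ in $N(x)$. This gives $k\ge 2\lambda$, a contradiction. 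Hence every pair has co-degree $\lambda$.

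For item (4), with $\diam(G)=3$ and $k<2\lambda$, I would first check that the propagation argument of item (3) never used the diameter. It only used $k<2\lambda$ and that pairs at distance $2$ share $\lambda$ neighbours, so it applies here.
\begin{itemize}
\item If some edge lies in a triangle, then the propagation shows every edge does, so every edge has co-degree $\lambda$. I would then look for a contradiction from a vertex pair at distance $3$ via a degree count similar to the one above. I expect this to be the delicate case.
\item In the triangle-free case, I would fix $x$ and layer $V$ into sets $D_0=\{x\},D_1,D_2,D_3$ by distance from $x$. I would then show that no layer contains an edge, which makes $G$ bipartite. An edge inside $D_1$ would give a triangle. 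For an edge inside $D_2$ or $D_3$, I would use the $\lambda$-overlap counts between layers together with $k<2\lambda$ to force $\deg\ge 2\lambda$, exactly as in item (3).
\end{itemize}
This last layered counting is where I expect the real work to lie.
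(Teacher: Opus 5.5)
The paper gives no proof of this proposition; it only quotes it from \cite{M}, so your argument has to stand on its own. Items (1), (2) and (3) do.
\begin{itemize}
\item Both cases of (2) are complete.
\item In (3), the propagation step is correct. It says that an edge of co-degree $0$ forces every edge at both of its ends to have co-degree $0$, and it uses only $k<2\lambda$ and the fact that pairs at distance $2$ share $\lambda$ neighbours. Together with the triangle-free count at a vertex $w\in D$, this is a valid proof.
\item Two trivial facts in (3) are left implicit: $z\not\sim y$ because $c_{x,y}=0$, and $D\neq\emptyset$ because each $z\in N(x)$ has $k-1\geq 1$ further neighbours.
\end{itemize}

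\textbf{The gap is item (4).} In both subcases you only say what you would look for, so (4) is not actually proved. The missing counts are short.

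\emph{Triangle case.} If some edge lies in a triangle, your propagation gives that every edge has co-degree $\lambda$. Take $d(x,u)=3$ and a shortest path $x,y,z,u$. Then $z$ has:
\begin{itemize}
\item $\lambda$ neighbours in $N(x)$;
\item another $\lambda$ neighbours in $N(u)$, which are at distance at least $2$ from $x$ and hence outside $N(x)$;
\item the neighbour $u$ itself.
\end{itemize}
So $k\geq 2\lambda+1$, a contradiction. This case is not delicate. The same count shows directly that every edge from $D_2$ to $D_3$ has co-degree $0$, so the case split can be avoided altogether.

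\emph{Triangle-free case.} Use layers $D_0,\dots,D_3$ around $x$; $D_1$ is independent because $G$ is triangle-free.
\begin{itemize}
\item Edge $zz'$ inside $D_2$. Pick $y\in N(z)\cap D_1$; then $d(y,z')=2$. Since $D_1$ is independent and $x\not\sim z'$, the $\lambda$ common neighbours of $y$ and $z'$ lie in $D_2$. They are disjoint from the $\lambda$ neighbours of $z'$ in $D_1$, so $k\geq 2\lambda$.
\item Edge $uu'$ inside $D_3$. Pick $z\in N(u)\cap D_2$; then $d(z,u')=2$. The $\lambda$ common neighbours of $z$ and $u'$ lie in $N(u')\subseteq D_2\cup D_3$. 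They are disjoint from the $\lambda$ neighbours of $z$ in $D_1$, so again $k\geq 2\lambda$.
\end{itemize}
Edges of a distance layering only join a layer to itself or to an adjacent layer. So once no layer contains an edge, $D_0\cup D_2$ and $D_1\cup D_3$ give the bipartition.

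With these counts written out, your plan for (4) becomes a complete proof.
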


In the following sections, we will consider (not necessarily connected) simple graphs whose co-degree sequence is binary, i.e., consists only of zeroes and ones. In this case, the regularity is not always guaranteed.

A similar problem has been considered by Maffucci in \cite{Maf}. Given a simple graph $G=(V,E)$, set
$$A(G)=\{a : \text{there exist } x,y \in V \text{ such that } c_{x,y}=a \}.$$
In particular, the problem posed in \cite{Maf} is to classify all polyhedra according to the set $A(G)$ or, in other words, according to the underlying set of the list $\ga(G)$.

\section{\texorpdfstring{$4$-cycle-free}{4-cycle-free} graphs}\label{free4}

In this section we focus our attention on simple graphs on $v$ vertices with the property that two vertices have at most one common neighbor; in other words, we are interested in those simple graphs  whose co-degree sequence $\ga$ is binary, i.e., has entries equal to $0$ or $1$ only:
$$\ga=\gamma_c:=\left(1^c, 0^{\binom{v}{2}-c}   \right)\quad \text{for some } c\in \left[0, \binom{v}{2} \right].$$
As already remarked in \cite{M}, a simple graph $G$ on $v$ vertices is such that $\ga(G)=\gamma_c$ for some $c\in \left[0, \binom{v}{2} \right]$ if and only if $G$ is $C_4$-free, meaning that no cycle of length four occurs.

Let $\mathcal{Q}_v$ be the set of all simple graphs, up to isomorphism, on $v\geqslant 2$ vertices which are $C_4$-free (see Figure~\ref{4} for the case $v=4$).

\begin{figure}[htbp]
\begin{center}
    \begin{tabular}{c|c|c|c}
    &&&\\
\begin{tikzpicture}[scale = 0.4]
\coordinate (A) at (0,2);
\coordinate (B) at (2,2);
\coordinate (C) at (2,0);
\coordinate (D) at (0,0);
\draw[fill] (A) circle (3pt);
\draw[fill] (B) circle (3pt);
\draw[fill] (C) circle (3pt);
\draw[fill] (D) circle (3pt);
\end{tikzpicture}\quad{}
 & 
\quad \begin{tikzpicture}[scale = 0.4]
\coordinate (A) at (0,2);
\coordinate (B) at (2,2);
\coordinate (C) at (2,0);
\coordinate (D) at (0,0);
\draw (A)--(B);
\draw[fill] (A) circle (3pt);
\draw[fill] (B) circle (3pt);
\draw[fill] (C) circle (3pt);
\draw[fill] (D) circle (3pt);
\end{tikzpicture}\quad{}  & 
\quad \begin{tikzpicture}[scale = 0.4]
\coordinate (A) at (0,2);
\coordinate (B) at (2,2);
\coordinate (C) at (2,0);
\coordinate (D) at (0,0);
\draw (A)--(B);
\draw (C)--(D);
\draw[fill] (A) circle (3pt);
\draw[fill] (B) circle (3pt);
\draw[fill] (C) circle (3pt);
\draw[fill] (D) circle (3pt);
\end{tikzpicture}\quad {} & 
\quad \begin{tikzpicture}[scale = 0.4]
\coordinate (A) at (0,2);
\coordinate (B) at (2,2);
\coordinate (C) at (2,0);
\coordinate (D) at (0,0);
\draw (A)--(B)--(C);
\draw[fill] (A) circle (3pt);
\draw[fill] (B) circle (3pt);
\draw[fill] (C) circle (3pt);
\draw[fill] (D) circle (3pt);
\end{tikzpicture}\quad {} \\ &&&\\ \hline &&&\\
\begin{tikzpicture}[scale = 0.4]
\coordinate (A) at (0,2);
\coordinate (B) at (2,2);
\coordinate (C) at (2,0);
\coordinate (D) at (0,0);
\draw (A)--(B)--(C)--(D);
\draw[fill] (A) circle (3pt);
\draw[fill] (B) circle (3pt);
\draw[fill] (C) circle (3pt);
\draw[fill] (D) circle (3pt);
\end{tikzpicture}\quad{}
 & 
\quad \begin{tikzpicture}[scale = 0.4]
\coordinate (A) at (0,2);
\coordinate (B) at (2,2);
\coordinate (C) at (2,0);
\coordinate (D) at (0,0);
\draw (A)--(B)--(C)--(A);
\draw[fill] (A) circle (3pt);
\draw[fill] (B) circle (3pt);
\draw[fill] (C) circle (3pt);
\draw[fill] (D) circle (3pt);
\end{tikzpicture}\quad{}  & 
\quad \begin{tikzpicture}[scale = 0.4]
\coordinate (A) at (0,2);
\coordinate (B) at (2,2);
\coordinate (C) at (2,0);
\coordinate (D) at (0,0);
\draw (A)--(B)--(C);
\draw (D)--(B);
\draw[fill] (A) circle (3pt);
\draw[fill] (B) circle (3pt);
\draw[fill] (C) circle (3pt);
\draw[fill] (D) circle (3pt);
\end{tikzpicture}\quad {} & 
\quad \begin{tikzpicture}[scale = 0.4]
\coordinate (A) at (0,2);
\coordinate (B) at (2,2);
\coordinate (C) at (2,0);
\coordinate (D) at (0,0);
\draw (A)--(B)--(C)--(A)--(D);
\draw[fill] (A) circle (3pt);
\draw[fill] (B) circle (3pt);
\draw[fill] (C) circle (3pt);
\draw[fill] (D) circle (3pt);
\end{tikzpicture}\quad {} \\ &&&\\ 
\end{tabular}
\end{center}
\caption{The eight elements of  $\mathcal{Q}_4$.}\label{4}
\end{figure}

Since the co-degree sequence of a graph $G \in \mathcal{Q}_v$ is
binary, there is a unique $c \in 
\left[0,\binom{v}{2}\right]$ such that $\ga(G)=\gamma_c$. So, we can define a function
$\c: \mathcal{Q}_v\to \left[0,\binom{v}{2}\right]$
in such a way that a graph $G \in \mathcal{Q}_v$ has co-degree sequence $\gamma_{\c(G)}$. 
In other words, $\c(G)$ is the number of $1$'s in the sequence $\ga(G)$.
Problem~\ref{problema} for this class of graphs asks if, for a given $c\in\left[0, \binom{v}{2} \right]$, there exists $G \in \mathcal{Q}_v$ such that $\ga(G)=\gamma_c$.
As already seen, Theorem~\ref{friendship} implies the solution of this problem for the case $c=\binom{v}{2}$.
Moving to $c = 0$, ambiguities arise. Indeed, any graph in which the length of each path is at most equal to $1$ is a solution.
We conclude that the solution to our problem is not unique in general, meaning that two (or more) non-isomorphic graphs may be the solution for the same instance.

The following result relates the degree sequence and the co-degree one of a graph.

\begin{lem}
If $G$ is a simple graph on $v$ vertices, $\bm{\pi}= (d_1 , \ldots, d_v)$ is its degree sequence and $\ga(G)=\left(\lambda_1,\ldots,\lambda_{\binom{v}{2}}\right)$,
 then
$$\sum_{j=1}^{\binom{v}{2}} \lambda_j  =\sum_{i=1}^v \binom{d_i}{2}.$$
\end{lem}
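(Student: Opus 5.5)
Both sides of the identity count the same finite set in two different ways. Let
$$P=\bigl\{(z,\{x,y\}) : z\in V,\ \{x,y\}\subseteq N_G(z),\ x\neq y\bigr\},$$
the set of "cherries" (paths of length two with a marked middle vertex), where we record the unordered pair of endpoints together with the centre $z$. We compute $|P|$ by first fixing the unordered pair and then by first fixing the centre.

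\emph{Fixing the pair first.} For a fixed unordered pair $\{x,y\}$ of distinct vertices, the admissible centres $z$ are exactly the vertices adjacent to both $x$ and $y$, i.e.\ the elements of $N_G(x)\cap N_G(y)$. Since $G$ is simple there are no loops, so $z\neq x,y$ automatically. Hence the number of such $z$ is $c_{x,y}$, and
$$|P|=\sum_{\{x,y\}} c_{x,y}.$$
The co-degree sequence $\ga(G)$ is by definition the multiset $\{c_{x,y}\}$ over all $\binom{v}{2}$ pairs, merely reordered. Therefore this sum equals $\sum_{j=1}^{\binom{v}{2}}\lambda_j$.

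\emph{Fixing the centre first.} For a fixed $z$, the admissible pairs are exactly the $2$-subsets of $N_G(z)$. Since $|N_G(z)|=\deg_G(z)$, there are $\binom{\deg_G(z)}{2}$ of them. Summing over $z$ gives
$$|P|=\sum_{z\in V}\binom{\deg_G(z)}{2}=\sum_{i=1}^v\binom{d_i}{2},$$
because $\bm{\pi}$ is just the list of degrees in some order.

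Equating the two expressions for $|P|$ yields the identity. Alternatively, one can argue with the adjacency matrix, using that $c_{x,y}=(A^2)_{x,y}$ as noted in the introduction. Summing the off-diagonal entries of $A^2$ over the upper triangle gives $\tfrac12\bigl(\mathbf{1}^{T}A^2\mathbf{1}-\operatorname{tr}A^2\bigr)=\tfrac12\bigl(\sum_i d_i^2-\sum_i d_i\bigr)=\sum_i\binom{d_i}{2}$. There is no real obstacle here. The only points needing care are that simplicity of $G$ excludes $z\in\{x,y\}$, and that reordering into non-increasing order does not change either sum.
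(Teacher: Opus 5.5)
Your proof is correct and matches the paper's own argument: the paper also double counts cherries (paths of length $2$), once by their central vertex to get $\sum_i\binom{d_i}{2}$ and once by their pair of endpoints to get $\sum_j\lambda_j$. Your adjacency-matrix variant is also valid but not needed.
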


\begin{proof}
We proceed by double counting the number of paths of length $2$ in $G$, the so called \emph{cherries}. First of all, such cherries can be counted starting from their central vertex $v_i$, whose degree is $d_i$. The cherries centered at the vertex $v_i$ are then $\binom{d_i}{2}$, and so the total number of cherries is $\sum\limits_{i=1}^v \binom{d_i}{2}$.
On the other hand, every couple of distinct vertices $x,y$ forms the endpoints of exactly $c_{x,y}$ cherries. Summing the co-degree of each of the $\binom{v}{2}$ pairs of vertices, we get $\sum\limits_{j=1}^{\binom{v}{2}} \lambda_j  =\sum\limits_{i=1}^v \binom{d_i}{2}$.
\end{proof}

In particular, if a graph $G \in \mathcal{Q}_v$ has degree sequence $\bm{\pi}= (d_1 , \ldots, d_v)$, then
\begin{equation}\label{cd}
\c(G)=\sum_{i=1}^v \binom{d_i}{2}.
\end{equation}
Furthermore, if $e$ is the number of edges of $G$, then 
$$\c(G)=\sum_{i=1}^{v} \binom{d_i}{2}=\frac{1}{2}\left( \sum_{i=1}^{v} d_i^2-\sum_{i=1}^{v} d_i\right) =\frac{1}{2}\left( \sum_{i=1}^{v} d_i^2-2e\right) = \frac{1}{2}M_1(G) - e,$$
where $M_1(G)=\sum\limits_{i=1}^n d_i^2$ is the so called \emph{first Zagreb index} of $G$, see \cite{Zag}.
So, we can write
$$M_1(G)=2(\c(G)+e).$$

\begin{rem}\label{CS}
By applying the Cauchy-Schwarz inequality, it is easy to prove that $M_1(G)\geqslant \frac{4e^2}{v}$ for every simple graph $G$ with $v$ vertices and $e$ edges, where the equality holds if and only if $G$ is regular.
\end{rem}

\begin{lem}\label{G-x}
Let $G\in \mathcal{Q}_v$, $v\geqslant 2$,  and let $x\in V$. Then $G-x \in \mathcal{Q}_{v-1}$ and
\begin{equation}\label{eqG-x}
\c(G-x)=\c(G) - \binom{\deg_G(x)}{2} - \sum\limits_{y\in N_G(x)} \left(\deg_G(y)-1\right).
\end{equation}
\end{lem}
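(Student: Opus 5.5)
The first claim holds because $C_4$-freeness is inherited by induced subgraphs. Any $4$-cycle in $G-x$ would already be a $4$-cycle in $G$. So $G-x$ is $C_4$-free on $v-1$ vertices, and $G-x\in\mathcal{Q}_{v-1}$ (for $v=2$, read $G-x$ as the one-vertex graph with $\c=0$, or simply assume $v\geqslant 3$).

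For the formula we use \eqref{cd}, which holds for both $G$ and $G-x$ since both lie in the relevant classes. Deleting $x$ changes degrees as follows:
\begin{itemize}
\item the term $\binom{\deg_G(x)}{2}$ disappears;
\item each neighbour $y\in N_G(x)$ drops from degree $\deg_G(y)$ to $\deg_G(y)-1$;
\item every other vertex keeps its degree.
\end{itemize}
Hence
$$\c(G)-\c(G-x)=\binom{\deg_G(x)}{2}+\sum_{y\in N_G(x)}\left[\binom{\deg_G(y)}{2}-\binom{\deg_G(y)-1}{2}\right].$$
Pascal's rule gives $\binom{d}{2}-\binom{d-1}{2}=d-1$, so each bracket equals $\deg_G(y)-1$, and rearranging yields \eqref{eqG-x}.

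There is also a double-counting check: $\c(G)$ counts cherries, and the cherries destroyed by removing $x$ are those centred at $x$ (there are $\binom{\deg_G(x)}{2}$ of them) and those having $x$ as an endpoint (there are $\sum_{y\in N_G(x)}(\deg_G(y)-1)$ of them). These two families are disjoint, and no cherry is counted twice. No step is a real obstacle; the only point to watch is applying \eqref{cd} to $G-x$, which is exactly why its membership in $\mathcal{Q}_{v-1}$ is established first.
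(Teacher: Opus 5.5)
Your proof is correct and essentially matches the paper's. The paper subtracts the cherries destroyed by deleting $x$: the $\binom{\deg_G(x)}{2}$ centred at $x$ and the $\sum_{y\in N_G(x)}(\deg_G(y)-1)$ having $x$ as an endpoint, which is exactly your closing double-counting check. Your main computation, differencing \eqref{cd} for $G$ and $G-x$ via $\binom{d}{2}-\binom{d-1}{2}=d-1$, is the algebraic form of the same count, and establishing $G-x\in\mathcal{Q}_{v-1}$ first is what lets you apply \eqref{cd} to $G-x$.
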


\begin{proof}
The graph $G-x$ is obtained from $G$ by removing the vertex $x$ and all edges incident on it, so $G-x$ is still $C_4$-free. Moreover, $\c(G-x)$ is computed by subtracting from $\c(G)$ the number of pairs of neighbors of $x$, which lose a common neighbor (namely, $\binom{\deg_G(x)}{2}$), and the number of cherries having $x$ as an endpoint and a vertex $y\in N_G(x)$ as central vertex ($\deg_G(y)-1$ instances for each $y\in N_G(x)$). This proves the validity of \eqref{eqG-x}.
\end{proof}

\begin{prop}\label{v-2}
Let $G \in \mathcal{Q}_v$ with $v\geqslant 3$ and
suppose that  $G$ is connected. Then $\c(G)\geqslant v-2$,
where the equality holds if and only if $G$ is the path on $v$ vertices.
\end{prop}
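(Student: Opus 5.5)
We use formula \eqref{cd}, $\c(G)=\sum_{i=1}^v \binom{d_i}{2}$, which holds for every $G\in\mathcal{Q}_v$, and reduce the statement to an inequality on degree sequences of connected graphs. Since $G$ is connected with $v\geqslant 3$, every degree satisfies $d_i\geqslant 1$, and $G$ has $e\geqslant v-1$ edges, so $\sum_i d_i = 2e\geqslant 2v-2$.

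The key observation is that $\binom{d}{2}\geqslant d-1$ for every integer $d\geqslant 1$, because $\binom{d}{2}-(d-1)=\binom{d-1}{2}\geqslant 0$. Summing over all vertices gives
$$\c(G)=\sum_{i=1}^v\binom{d_i}{2}\geqslant \sum_{i=1}^v (d_i-1)=2e-v\geqslant 2(v-1)-v=v-2.$$
This proves the inequality. I do not expect it to be an obstacle; the only point to check is that $d_i\geqslant 1$, which uses connectedness together with $v\geqslant 2$.

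For the equality case, we need both inequalities in the chain above to be equalities.
\begin{itemize}
\item The second inequality is an equality exactly when $e=v-1$. A connected graph with $v-1$ edges is a tree.
\item The first inequality is an equality exactly when $\binom{d_i-1}{2}=0$ for every $i$, that is, when $d_i\leqslant 2$ for all vertices.
\end{itemize}
A tree with maximum degree at most $2$ is a path, so equality forces $G$ to be the path on $v$ vertices.

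Conversely, the path on $v$ vertices is $C_4$-free, so it lies in $\mathcal{Q}_v$. Its degree sequence is $(2^{v-2},1^2)$, and \eqref{cd} gives $\c(G)=(v-2)\binom{2}{2}=v-2$. This completes the characterization. The whole argument is elementary, and the only care needed is in tracking the two sources of equality.
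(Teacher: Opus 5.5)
Your proof is correct, and it takes a different, more economical route than the paper's.

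\textbf{The paper's route.} The paper proves the inequality analytically. It handles the regular case separately ($\c(G)=v\binom{k}{2}\geqslant v$). Otherwise it combines the Cauchy--Schwarz bound $M_1(G)>4e^2/v$ of Remark~\ref{CS} with $e\geqslant v-1$ to get $\c(G)>(v-1)(v-2)/v\geqslant v-3$, and then rounds up by integrality. That estimate is not sharp at the path, so it says nothing about equality. The paper therefore adds an induction on $v$:
\begin{itemize}
\item a graph with $\c(G)=v-2$ has a leaf $x$;
\item Lemma~\ref{G-x} gives $\c(G-x)=v-1-\deg_G(y)$ for the neighbor $y$ of $x$;
\item the bound already proved forces $\deg_G(y)=2$;
\item induction makes $G-x$ a path ending at $y$.
\end{itemize}

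\textbf{Your route.} Your termwise inequality $\binom{d}{2}\geqslant d-1$ is equivalent to the exact identity $\c(G)-(v-2)=2(e-v+1)+\sum_i\binom{d_i-1}{2}$. This writes the excess as a sum of two nonnegative quantities: twice the cyclomatic number, and a degree term that vanishes exactly when all degrees are at most $2$. The bound and the equality characterization therefore come out together, with no regular/non-regular split, no rounding, and no induction.

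\textbf{What each buys.} The paper's route mainly serves to display the Zagreb-index viewpoint and Lemma~\ref{G-x}. The identity behind your argument, $2e=\c(G)+v-\sum_{y}\binom{\deg_G(y)-1}{2}$, is in fact the one the paper itself uses later in the proof of Proposition~\ref{planare}.
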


\begin{proof}
If $G$ is $k$-regular, then $k\geqslant 2$ and $\c(G)=v \binom{k}{2}\geqslant v$  by \eqref{cd}. So, assume that $G$ is not regular. Then,  $M_1(G) > \frac{4e^2}{v}$
by Remark~\ref{CS}.
Since $G$ is connected, we have $e\geqslant v-1$, whence
$$\c(G)> e \left(\frac{2e}{v}-1\right) \geqslant \frac{(v-1)(v-2)}{v}\geqslant v-3.$$

It is easy to see that a path $P$ on $v$ vertices is such that
$\c(P) =v-2$; we prove, by induction on $v$, that a connected graph $G\in \mathcal{Q}_v$  with $\c(G ) = v - 2$ is necessarily a path.

The only connected graphs on $v=3$ vertices are the path, for which 
$c=v-2=1$, and the complete graph $K_3$, for which $c=3$.
So, let us consider a connected graph 
$G\in \mathcal{Q}_v$, $v\geqslant 4$, having $\c(G)=v-2$. 
Note that $G$ has a vertex $x$ of degree $1$, otherwise
$\c(G)\geqslant v \binom{2}{2}> v-2$, a contradiction.
By Lemma~\ref{G-x}, we have
$$\c(G-x)=\c(G)-\deg_G(y)+1=v-1-\deg_G(y).$$
Since $G-x\in \mathcal{Q}_{v-1}$ is still connected, we obtain that $\c(G-x)\geqslant v-3$ by what we proved before.
It follows that $\deg_G(y)\leqslant 2$ and hence $\deg_G(y)=2$, being $G-x$ connected. In particular, $\c(G-x)=v-3$ and, by the induction hypothesis, $G-x$ is the path on $v-1$ vertices. We conclude that  $G$ is a path.
\end{proof}

\section{Trees}\label{alberi}

A tree $T$ is a connected acyclic graph. Among many properties of trees, we recall that any two vertices of $T$ can be connected by a unique path and that, if $T$ is defined on $v$ vertices, then it has exactly $v-1$ edges.
We also recall that, in a tree, every two vertices have at most one common neighbor.
For instance, the graph in Figure~\ref{star} is a tree, the so called \emph{star graph}.

\begin{figure}[ht]
\begin{tikzpicture}[scale = 0.3]

\draw 
  node (p) [
    minimum size=3cm,
    regular polygon, 
    regular polygon sides=10
  ] {};
  
\foreach \n in {1,2,...,10}{
 \node[anchor=\n*(360/10)] at (p.corner \n) {};
}
\coordinate (A) at (p.corner 1);
\coordinate (B) at (p.corner 2);
\coordinate (C) at (p.corner 3);
\coordinate (D) at (p.corner 4);
\coordinate (E) at (p.corner 5);
\coordinate (F) at (p.corner 6);
\coordinate (G) at (p.corner 7);
\coordinate (H) at (p.corner 8);
\coordinate (I) at (p.corner 9);
\coordinate (J) at (p.corner 10);
\coordinate (O) at (0,0);

\draw (B)--(O)--(A);
\draw (D)--(O)--(C);
\draw (F)--(O)--(E);
\draw (G)--(O)--(H);
\draw (J)--(O)--(I);

\draw[fill] (A) circle (3pt);
\draw[fill] (B) circle (3pt);
\draw[fill] (C) circle (3pt);
\draw[fill] (D) circle (3pt);
\draw[fill] (E) circle (3pt);
\draw[fill] (F) circle (3pt);
\draw[fill] (G) circle (3pt);
\draw[fill] (H) circle (3pt);
\draw[fill] (I) circle (3pt);
\draw[fill] (J) circle (3pt);
\draw[fill] (O) circle (3pt);

\end{tikzpicture}
\caption{The star graph on $v=11$ vertices.}\label{star}
\end{figure}

Given a tree $T$, we designate a vertex $r \in V$ as the \emph{root} of $T$. 
Given a vertex $x$, its \emph{parent} is the vertex $y$ adjacent to $x$ on the path to the root (a root has no parent), while $x$ is said to be the \emph{child} of $y$.
Any vertex has a unique parent, but can have any number of children, in general. The only exception is the root, that does not have any parent. 
The vertices of degree $1$ and different from the root are called \emph{leaves}; clearly, a leaf does not have any child. 
Finally, we call \emph{siblings} two (or more) vertices sharing the same parent.

The \emph{height} $\ell$ of a tree is the length (i.e., the number of edges) of the longest path connecting the root to a leaf. The \emph{level} $i$ is defined as the set of vertices at distance $i$ from the root. By convention, the root is at level $0$. We denote by $b_i$ the number of vertices that belong to level $i$ or, equivalently, the total number of children of all the vertices at level $i - 1$. 
We will represent a tree $T$ by drawing the root at the top and the siblings at level $i$ on the same horizontal line, as in Figure~\ref{F3.13}. Such a representation allows us to speak of the leftmost vertex at level $i$.

We denote by $T (v, c)$ an instance of the problem of finding a tree on $v$ vertices having co-degree sequence equal to  $\gamma_c$.

\begin{thm}\label{Mic}
If $\gamma_c$ is the co-degree sequence of a tree on $v$ vertices, then
$v-2\leqslant c \leqslant \binom{v-1}{2}$. 
Moreover, when choosing $c=v-2$ or $c=\binom{v-1}{2}$, the solution of $T(v,c)$ is unique, and consists in the first case of the path, in the latter of the star graph.
\end{thm}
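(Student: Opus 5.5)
The lower bound and its equality case are already contained in Proposition~\ref{v-2}. A tree on $v\geqslant 3$ vertices is connected and $C_4$-free, so $\c(T)\geqslant v-2$, with equality exactly when $T$ is the path. The degenerate case $v=2$ is checked directly: the only tree is $K_2$, with $c=0=v-2=\binom{1}{2}$, and the path and the star coincide. From now on I assume $v\geqslant 3$ and only need to handle the upper bound and its equality case.

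For the upper bound I will use \eqref{cd}, which gives $\c(T)=\sum_{i=1}^v\binom{d_i}{2}$. Here $d_i\geqslant 1$ and $\sum_i d_i=2(v-1)$. Write $d_i=1+a_i$ with $a_i\geqslant 0$. Then $\sum_i a_i=v-2$ and $\c(T)=\sum_i\binom{a_i+1}{2}$.

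The function $a\mapsto\binom{a+1}{2}$ is superadditive on nonnegative integers, since
$$\binom{a+b+1}{2}-\binom{a+1}{2}-\binom{b+1}{2}=ab\geqslant 0 .$$
Hence $\sum_i\binom{a_i+1}{2}\leqslant\binom{\sum_i a_i+1}{2}=\binom{v-1}{2}$. Equality holds if and only if all pairwise products $a_ia_j$ with $i\neq j$ vanish, that is, at most one $a_i$ is nonzero.

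For the equality case, at most one nonzero $a_i$ means one vertex has degree $v-1$ and all the others have degree $1$. A tree with this degree sequence is the star, which gives uniqueness.

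Alternatively, one could argue combinatorially: two vertices of a tree with a common neighbour are nonadjacent (no triangles). So $c$ is at most the number of nonadjacent pairs, $\binom{v}{2}-(v-1)=\binom{v-1}{2}$. Equality would force every nonadjacent pair to be at distance $2$, so $\diam(T)\leqslant 2$, and a tree of diameter at most $2$ on $v\geqslant 3$ vertices is the star.

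None of these steps is a real obstacle. The one point needing care is the equality characterization in the superadditivity step, i.e. checking that the bound is attained exactly when all but one $a_i$ vanish.
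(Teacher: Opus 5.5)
Your proof is correct. The lower bound and its equality case are handled exactly as in the paper, via Proposition~\ref{v-2}. You are slightly more careful than the paper in treating $v=2$ separately, since that proposition assumes $v\geqslant 3$.

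For the upper bound, your combinatorial ``alternative'' is essentially the paper's argument. In a tree a pair has a common neighbor precisely when it is at distance $2$, so $\c(T)=\binom{v-1}{2}-\left|\{\text{pairs at distance at least } 3\}\right|$. Equality then means diameter at most $2$, i.e.\ the star.

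Your main argument is genuinely different and purely algebraic. It uses only \eqref{cd}, $d_i\geqslant 1$ and $\sum_i d_i=2(v-1)$. Your superadditivity step amounts to the exact identity
$\binom{v-1}{2}-\c(T)=\sum_{i<j}(d_i-1)(d_j-1)$.
This gives the bound and the equality case (exactly one vertex of degree greater than $1$) at once.

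The paper's route gives the deficit a structural meaning and matches the distance counting used later for canonical trees (Theorem~\ref{3.5.8}). Yours needs no facts about paths or distances in trees, only the degree sequence. Putting the two together yields a nice identity: in any tree, the number of pairs at distance at least $3$ equals $\sum_{i<j}(d_i-1)(d_j-1)$.
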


\begin{proof}
As a tree is connected, the minimum value of $c$ is $c_{\min}= v-2$ by Proposition~\ref{v-2}; in this case, the tree is necessarily  a path. 
Also, since in trees a path connecting two vertices is unique, counting the number of vertices sharing exactly one neighbor is equivalent to count the number of pairs at distance $2$. 
A tree on $v$ vertices has exactly $v-1$ edges, so there are exactly $v-1$ pairs of vertices at distance $1$. 
As a consequence, the number of pairs at distance at least  $2$ is equal to $\binom{v}{2}-(v-1)=\binom{v-1}{2}$. So, 
$$c=|\{\text{pairs at distance } 2\}| =
\binom{v-1}{2} - |\{\text{pairs at distance at least }  3\}|.$$
It follows that the maximum admitted value for $c$ is reached when no pairs of vertices are at distance greater than or equal to $3$, that is, $c_{\max} = \binom{v-1}{2}$. 
The only tree realizing such a situation is the star, see Figure~\ref{star}.
\end{proof}

\subsection{Canonical trees}

Starting from our observations about the ambiguity of reconstruction, we define a canonical structure for trees, from which we can deduce an easy formula for the computation of their co-degree sequence.
To do that, we define the operation of \emph{left-shift}.

\begin{defi}\label{3.5.4}
Given a tree $T$, let $\nu_y$ be the leftmost  leaf in $T$, and let us consider a vertex $\nu_x$ at level $x \geqslant 1$ and not on the path from $\nu_y$ to the root. Let $L(x)$ denote the set of descendants of $\nu_x$. The \emph{left-shift operation} applied on $\nu_x$ consists in moving $L(x)$ such that $\nu_y$ becomes its root and $\nu_x$ becomes a leaf. 
\end{defi}

The tree $\tilde T$  of Figure~\ref{F3.13b} has been obtained from the tree $T$ of Figure~\ref{F3.13}
by applying a left-shift operation.

\begin{figure}[ht]
\centering
\begin{subfigure}{0.45\textwidth}
\centering
\begin{tikzpicture}[scale = 0.4]

\coordinate (A) at (0,0);
\coordinate (B) at (1,0);
\coordinate (C) at (2,0);
\coordinate (D) at (1,1);
\coordinate (E) at (2.5,2);
\coordinate (F) at (4,1);
\coordinate (G) at (4,0);
\coordinate (H) at (3,-1);
\coordinate (I) at (5,-1);
\coordinate (J) at (2,-2);
\coordinate (K) at (3,-2);
\coordinate (L) at (4,-2);
\coordinate (M) at (5,-2);
\coordinate (N) at (4,-3);
\coordinate (O) at (6,-3);

\draw (A)--(D)--(B);
\draw (C)--(D)--(E)--(F)--(G)--(H)--(J);
\draw (K)--(H)--(L);
\draw (G)--(I)--(M)--(N);
\draw (M)--(O);
\draw[fill] (A) circle (3pt);
\draw[fill] (B) circle (3pt);
\draw[fill] (C) circle (3pt);
\draw[fill] (D) circle (3pt);
\draw[fill] (E) circle (3pt);
\draw[fill] (F) circle (3pt);
\draw[fill] (G) circle (3pt);
\draw[fill] (H) circle (3pt);
\draw[fill] (I) circle (3pt);
\draw[fill] (J) circle (3pt);
\draw[fill] (K) circle (3pt);
\draw[fill] (L) circle (3pt);
\draw[fill] (M) circle (3pt);
\draw[fill] (N) circle (3pt);
\draw[fill] (O) circle (3pt);
\end{tikzpicture}
\caption{$T$.}\label{F3.13}
\end{subfigure}
\begin{subfigure}{0.45\textwidth}
\centering
\begin{tikzpicture}[scale = 0.4]

\coordinate (A) at (0,0);
\coordinate (B) at (1,0);
\coordinate (C) at (2,0);
\coordinate (D) at (1,1);
\coordinate (E) at (2.5,2);
\coordinate (F) at (4,1);

\coordinate (G) at (0,-1);
\coordinate (H) at (-1,-2);
\coordinate (I) at (1,-2);

\coordinate (J) at (-2,-3);
\coordinate (K) at (-1,-3);
\coordinate (L) at (0,-3);
\coordinate (M) at (1,-3);
\coordinate (N) at (0,-4);
\coordinate (O) at (2,-4);

\draw (A)--(D)--(B);
\draw (C)--(D)--(E)--(F);

\draw (A)--(G)--(H)--(J);
\draw (K)--(H)--(L);
\draw (G)--(I)--(M)--(N);
\draw (M)--(O);
\draw[fill] (A) circle (3pt);
\draw[fill] (B) circle (3pt);
\draw[fill] (C) circle (3pt);
\draw[fill] (D) circle (3pt);
\draw[fill] (E) circle (3pt);
\draw[fill] (F) circle (3pt);
\draw[fill] (G) circle (3pt);
\draw[fill] (H) circle (3pt);
\draw[fill] (I) circle (3pt);
\draw[fill] (J) circle (3pt);
\draw[fill] (K) circle (3pt);
\draw[fill] (L) circle (3pt);
\draw[fill] (M) circle (3pt);
\draw[fill] (N) circle (3pt);
\draw[fill] (O) circle (3pt);

\end{tikzpicture}
\caption{$\tilde T$.}\label{F3.13b}
\end{subfigure}
\caption{Two trees on $15$ vertices with co-degree sequence $\gamma_{21}$.}
\end{figure}

\begin{thm}\label{3.5.5}
Given a tree $T$ with co-degree sequence $\gamma_c$, let $T'$ be obtained from $T$ after the application of the left-shift operation. Then, 
$\ga(T')=\gamma_c$, i.e., the left-shift operation does not change the co-degree sequence of a tree.
\end{thm}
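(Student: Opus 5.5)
Since a tree lies in $\mathcal{Q}_v$, its co-degree sequence is binary, and by \eqref{cd} it is determined by the degree sequence: $\c(T)=\sum_{z\in V}\binom{\deg_T(z)}{2}$. I will therefore show two things. First, $T'$ is again a tree on the same vertex set. Second, the multiset of vertex degrees of $T'$ equals that of $T$. Then \eqref{cd} gives $\c(T')=\c(T)=c$, so $\ga(T')=\gamma_c$.

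For the first point, describe the operation precisely. Let $p$ be the parent of $\nu_x$, and let $u_1,\dots,u_k$ be the children of $\nu_x$, with $k\geqslant 0$. The left-shift deletes the edges $\{\nu_x,u_i\}$ and adds the edges $\{\nu_y,u_i\}$ for $i=1,\dots,k$; the subtrees hanging below the $u_i$ are unchanged. The edge count stays $v-1$. Connectivity is preserved: each $u_i$, together with its subtree, is now attached to $\nu_y$, and $\nu_y$ still belongs to the part containing the root. The hypothesis that $\nu_x$ is not on the path from $\nu_y$ to the root is what guarantees this. It ensures $\nu_y\notin L(x)$, and also $\nu_y\neq\nu_x$ and $p\neq\nu_y$ in the relevant sense, so no cycle is created and the reattachment point is not cut off. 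A connected graph on $v$ vertices with $v-1$ edges is a tree, so $T'$ is a tree.

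For the second point, compare degrees vertex by vertex. Only $\nu_x$ and $\nu_y$ change. In $T$, $\deg_T(\nu_x)=k+1$ (parent plus $k$ children, using $x\geqslant 1$) and $\deg_T(\nu_y)=1$, because $\nu_y$ is a leaf. In $T'$, $\nu_x$ keeps only its parent, so $\deg_{T'}(\nu_x)=1$, while $\deg_{T'}(\nu_y)=1+k$. The two degrees are simply swapped and every other vertex keeps its degree. Hence $\sum_z\binom{\deg(z)}{2}$ is unchanged.

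The only delicate step is the well-definedness and tree property of $T'$. The main concern is the degenerate situation where $\nu_y$ could be a descendant of $\nu_x$, which the hypothesis on $\nu_x$ rules out. The other is the case $k=0$, where the operation is trivial. Once this is checked, the degree swap together with formula \eqref{cd} finishes the proof.
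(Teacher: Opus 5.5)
Your argument is correct, and it takes a different route from the paper's.

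The paper works directly with pairs at distance $2$. It writes $c=c_x+d+d'$, where $c_x$ counts such pairs inside the subtree $T_x$ rooted at $\nu_x$, $d$ counts the pairs formed by a child of $\nu_x$ and the parent of $\nu_x$, and $d'$ counts the pairs outside $T_x$. It then argues that the same three quantities arise after the shift, with $\nu_y$ in place of $\nu_x$, since both lie at level $\geqslant 1$.

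You instead use \eqref{cd}, which applies because trees are $C_4$-free, and reduce the statement to one observation: the operation only swaps the degrees $k+1$ and $1$ of $\nu_x$ and $\nu_y$. Your route is shorter and isolates the real invariant, namely the degree multiset. It also checks explicitly that $T'$ is again a tree, which the paper takes for granted. And it sidesteps the paper's bookkeeping, which is somewhat loose. For instance, the pairs $\{\nu_x,w\}$, with $w\neq\nu_x$ a neighbor of the parent of $\nu_x$, belong to none of the classes $c_x$, $d$, $d'$ as literally defined, although they are unaffected by the move. The paper's approach, on the other hand, needs no degree formula and stays within the distance-$2$ picture used later in Theorem~\ref{3.5.8}.

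One small tightening of your tree-property step. Deleting the edges $\{\nu_x,u_i\}$ leaves the component $V\setminus L(x)$, which contains $\nu_y$ because $\nu_y\notin L(x)$, together with the $k$ subtrees rooted at the $u_i$. Adding one edge from each of these subtrees to $\nu_y$ gives a connected graph with $v-1$ edges, hence a tree. Your remark about $p\neq\nu_y$ is not needed, and it holds automatically anyway, since $p$ has the child $\nu_x$ and so is not a leaf.
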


\begin{proof}
Let  $\nu_x$ the vertex on which we apply the left-shift operation to obtain $T'$ from $T$. 
Let us consider $T_x = L(x) \cup \{\nu_x\}$, the sub-tree composed by $\nu_x$, its root, and all its descendants, and let us denote $c_x$ the number of pairs of vertices in $T_x$ sharing one neighbor. Moreover, let $d$ be the number of children of $\nu_x$. Since $\nu_x$ is at level $x\geqslant 1$ in $T$, by hypothesis, $d$ is also the number of vertices in $T_x$ that share the neighbor $\nu_x$ with some vertex in $T \setminus  T_x$, precisely with the parent of $\nu_x$. As a consequence, $c = c_x + d + d'$, where $d'$ is the number of pairs of vertices in $T \setminus T_x$ sharing one neighbor.

Let us now consider the left-shift operation applied on $\nu_x$, consisting in moving $L(x)$ such that the leftmost leaf of $T$, say $\nu_y$, becomes its root. 
We underline that $\nu_y \not \in L(x)$, since $\nu_x$ is not on the path from 
$\nu_y$ to the root by hypothesis. 
We denote by $T'$ the resulting tree. Then, being $\gamma_{c'}$ its co-degree sequence, the value $c'$ can be computed following the same argument used for $c$, just replacing $\nu_x$ with $\nu_y$. Notice that $\nu_y$ is at level $y \geqslant 1$ as well, since it was the leftmost leaf in $T$, and that the number of pairs counted by 
$d$ and $d'$, previously defined, does not change. As a consequence, 
$c = c'$, and the statement follows.
\end{proof}

We say that a tree is \emph{canonical}, if it is such that the
left-shift operation cannot be further applied (no suitable vertex is left).

\begin{rem}\label{3.5.6}
If $T$ is canonical, then all its levels are composed by siblings, i.e., all the vertices on the same level have the same parent.
\end{rem}

From Theorem~\ref{3.5.5}, we deduce that if a solution to $T(v, c)$ exists, then a solution to the same instance, and having a canonical structure, exists too. So, from now on, we consider canonical trees only.
As a consequence, we can reformulate the co-degree reconstruction
problem as follows.

\begin{prob}\label{4.5}
Given a binary admissible sequence $\gamma_c$ of length $\binom{v}{2}$, for some $v \geqslant 2$, determine
whether there exists a \emph{canonical} tree on $v$ vertices having 
$\gamma_c$ as its co-degree sequence.
\end{prob}

Due to the structure of canonical trees, it is possible to find a closed formula to compute the value $c$ that defines the binary sequence $\gamma_c$.
By Remark~\ref{3.5.6}, a canonical tree is characterized by a path, on the left, whose length coincides with the height $\ell$ of the tree. 
We denote the vertices on this path, from the root to the leaf, as $r_0 , \ldots, r_\ell$. As a direct consequence of Remark~\ref{3.5.6}, each $r_i$, for
$i = 0, \ldots , \ell - 1$, is exactly the parent of $b_{i+1}$ vertices, corresponding to the siblings that constitute the level $i + 1$ of the tree.

\begin{thm}\label{3.5.8}
Let $T$ be a canonical tree on $v$ vertices of height $\ell$, with $b_0=1, b_1 , \ldots, b_\ell$ the cardinalities of its levels.  Then
\begin{equation}\label{3.10}
\c(T)=\sum_{i=1}^\ell \binom{b_i}{2} +v-1-b_1.
\end{equation}
\end{thm}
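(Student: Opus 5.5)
We use formula \eqref{cd}: since a tree is $C_4$-free, $\c(T)=\sum_{x\in V}\binom{\deg_T(x)}{2}$. By Remark~\ref{3.5.6}, the degrees in a canonical tree are explicit, so the proof reduces to reading them off and summing.

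Let $r_0,\dots,r_\ell$ be the leftmost path, so that the children of $r_i$ are exactly the $b_{i+1}$ vertices of level $i+1$, for $i=0,\dots,\ell-1$. Every vertex not on this path has no children: a vertex at level $i+1$ other than $r_{i+1}$ cannot be the parent of anything, because all of level $i+2$ has parent $r_{i+1}$. Hence the degrees are
\[
\deg(r_0)=b_1,\qquad \deg(r_i)=b_{i+1}+1 \ \ (1\leqslant i\leqslant \ell-1),\qquad \deg(r_\ell)=1,
\]
and every remaining vertex is a leaf of degree $1$, contributing $\binom{1}{2}=0$.

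Summing gives
\[
\c(T)=\binom{b_1}{2}+\sum_{i=1}^{\ell-1}\binom{b_{i+1}+1}{2}.
\]
Apply the identity $\binom{b+1}{2}=\binom{b}{2}+b$ to each term of the second sum. This gives
\[
\c(T)=\sum_{i=1}^{\ell}\binom{b_i}{2}+\sum_{i=2}^{\ell}b_i .
\]
Since $\sum_{i=0}^{\ell}b_i=v$ and $b_0=1$, we have $\sum_{i=2}^{\ell}b_i=v-1-b_1$, which is exactly \eqref{3.10}.

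There is no real obstacle here; the only care needed is at the boundary cases. The root has no parent, so its degree is $b_1$ and not $b_1+1$. The last vertex $r_\ell$ is a leaf. The degenerate case $\ell=0$ (so $v=1$) can be excluded, and for $\ell=1$ the formula gives $\binom{v-1}{2}$, matching the star in Theorem~\ref{Mic}, which serves as a check.

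As an alternative check, one can count pairs at distance $2$ directly. Siblings at each level contribute $\sum_{i}\binom{b_i}{2}$. Pairs consisting of a grandparent and a grandchild contribute one pair for each vertex at level $\geqslant 2$, giving $v-1-b_1$ in total.
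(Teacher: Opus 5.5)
Your proof is correct, but it takes a different route from the paper's. The paper counts pairs at distance $2$ directly. Siblings on level $i$ give $\binom{b_i}{2}$ pairs, and each vertex on a level $i\geqslant 2$ pairs with its grandparent $r_{i-2}$. The paper then argues that the remaining pairs are at distance at least $3$.

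You instead put the explicit degrees of a canonical tree into \eqref{cd}, so you count cherries by their center rather than by their endpoints. The identity $\binom{b_i+1}{2}=\binom{b_i}{2}+b_i$ then splits the cherries centered at $r_{i-1}$ into exactly the paper's two families: sibling pairs on level $i$, and grandchild--grandparent pairs $(x,r_{i-2})$.

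What your route buys is that exhaustiveness comes for free. The only structural input is that the non-leaf vertices are $r_0,\dots,r_{\ell-1}$ with the stated numbers of children, which you derive correctly from Remark~\ref{3.5.6}. The paper's classification, by contrast, must account for every other pair. It spells this out only for pairs with neither vertex on the leftmost path; mixed pairs such as $r_j$ and an off-path vertex on level $i\neq j$ are handled implicitly, although they do turn out to be at distance $1$, $2$ (the grandparent case) or at least $3$.

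The paper's route shows concretely which pairs produce the $1$'s in $\gamma_c$, which matches the distance-$2$ viewpoint of Theorem~\ref{Mic}. Your closing alternative check is essentially the paper's argument.
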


\begin{proof}
The number of vertices in a tree is given by the sum of the cardinalities of its levels: $v=\sum\limits_{i=0}^\ell b_i$.
By convention, the level of the root has cardinality 
$b_0 = 1$, so that
\begin{equation}\label{3.7}
\sum_{i=2}^\ell b_i =  v-b_1 -1.
\end{equation}
We count the number of pairs of vertices sharing one neighbor. We start considering
siblings: since $T$ is canonical, each level is composed by exactly $b_i$ siblings, for $i = 1, \ldots, \ell$.
By definition, any pair of siblings at level $i$ has the parent 
$r_{i-1}$ as common neighbor, for
$i = 1, \ldots,\ell$. 
Then, when considering siblings, each level contributes to $c$ for a value 
\begin{equation}\label{3.8}
c_i = \binom{b_i}{2},\quad \text{for } i = 1, \ldots,\ell.
\end{equation}
Furthermore, on each level a vertex shares one neighbor, specifically its parent $r_{i-1}$, with one vertex lying on the longest path, specifically $r_{i-2}$ (that is, the parent of its parent).
Notice that level $1$ is excluded from this computation, since the root, at level zero, has no parent by definition. 
The contribution to $c=\c(T)$ given by this type of pairs is expressed by
the number of the involved vertices, that is,
\begin{equation}\label{3.9}
c^*=\sum_{i=2}^\ell b_i.
\end{equation}
To complete our analysis, we have to consider a last type of pair: two vertices at different levels and not lying on the longest path, say $ x \in b_i$ and $y \in b_j$, with 
$1 \leqslant  i < j \leqslant \ell$. 
It is clear from the structure of canonical trees that the distance between $x$ and $y$ is $j - i + 2$,
so greater than or equal to $3$ for each $i \neq j$. 
So, this type of pair is not involved in the computation of $c$.

We conclude that $c$ is obtained as the sum of \eqref{3.8}, for 
$i = 1, \ldots, \ell$, and \eqref{3.9}:
$$c=\sum_{i=1}^\ell \binom{b_i}{2}+\sum_{i=2}^\ell b_i.$$
Replacing \eqref{3.7} in the previous equation gives
$c =\sum\limits_{i=1}^\ell \binom{b_i}{2}+ v-1- b_1$, as requested.
\end{proof}

From Theorem~\ref{3.5.8}, we reformulate the reconstruction problem in an arithmetical flavor.
Indeed, given two integers $v$ and $c$, we have to find $\ell$ suitable integers $b_1 , \ldots,b_\ell$ such that
$$c =\sum_{i=1}^\ell \binom{b_i}{2}+v-1-b_1.$$
We underline again that many solutions may exist.
This is reflected in the possibility of different choices for the parameter $\ell$, as well as for the parameter $b_1$, in
general.

\subsection{Levels of canonical trees and their cardinalities}

Let us consider a canonical tree $T$ of height $\ell$, and its co-degree sequence $\gamma_c$. We focus our attention on the cardinalities of its levels, $b_1 , \ldots, b_\ell$, that lead to a quick computation of the parameter $c$.  It is clear from \eqref{3.10} that $b_1$ plays a special role in the computation of $c$, while $b_2 ,\ldots,b_\ell$ are equally significant. From this observation, we choose an order on the cardinalities of the levels of a canonical tree, in order to reach a well-defined structure.

Let us consider two indices $2 \leqslant i < j \leqslant \ell$. Given a canonical tree $T$ of 
height $\ell$ and with co-degree sequence $\gamma_c$, if we exchange the vertices at level $i$ with the vertices at level $j$, we get a tree $T'$ that is still canonical, and with the same co-degree sequence. This last property directly follows from \eqref{3.10}.

As a consequence, we define a further operation on canonical trees, that consists of the
following: if $b_1 = 1$, we do not change the tree. If $b_1 \geqslant 2$, we designate one of the children of the root, different from $r_1$, as the new root of the tree. Actually, this operation does not
change the structure of the graph, since it consists in the renaming of the root only. On
the other hand, from a practical point of view, it allows to get $b_1 = 1$ for any canonical
tree. We also point out that the canonical tree thus obtained has height increased by one
with respect to the previous one. Example~\ref{3.5.9} clarifies the described operation.

Finally, we re-order the levels of the tree, except for the first one, in decreasing order
with respect to their cardinalities, that is, $b_2 \geqslant b_3 \geqslant \cdots \geqslant 
b_\ell$.
Equation \eqref{3.10}  ensures that such a re-ordering does not affect the co-degree sequence $\gamma_c$.

\begin{ex}\label{3.5.9}
Let us consider the tree $T$ in Figure~\ref{F3.13}, whose co-degree sequence is
$\ga= \gamma_{21}=(1^{21} , 0^{84} )$. 
Figure~\ref{3.14a} shows a second tree, $T'$, whose height is $\ell' = 7$ and with
same co-degree sequence as $T$, obtained from $T$ after the iterative application of the left-shift operation. It is canonical.
In Figure~\ref{3.14b}, the operation that leads to $b_1 = 1$ is shown. 
Notice that $T''$ has height increased by one with respect to $T'$: $\ell'' = 8$.
Finally, Figure~\ref{3.14c} represents the canonical tree $T'''$, that is obtained from $T''$ after the re-ordering of its levels with respect to the cardinalities.
\end{ex}

\begin{figure}[ht]
\centering
\begin{subfigure}{0.3\textwidth}
\centering
\begin{tikzpicture}[scale = 0.4]

\coordinate (A) at (0,0);
\coordinate (B) at (1,0);
\coordinate (C) at (2,0);
\coordinate (D) at (1,1);
\coordinate (E) at (2,2);
\coordinate (F) at (3,1);
\coordinate (G) at (-1,-1);
\coordinate (H) at (-2,-2);
\coordinate (I) at (-3,-3);
\coordinate (J) at (-4,-4);
\coordinate (K) at (-5,-5);

\coordinate (L) at (0,-2);
\coordinate (M) at (-2,-3);
\coordinate (N) at (-1,-3);
\coordinate (O) at (-3,-5);
\coordinate (Z) at (-6,-6);

\draw (K)--(J)--(I)--(H)--(G)--(A)--(D)--(B);
\draw (C)--(D)--(E)--(F);
\draw (G)--(L);
\draw (M)--(H)--(N);
\draw (J)--(O);
\draw[fill] (A) circle (3pt);
\draw[fill] (B) circle (3pt);
\draw[fill] (C) circle (3pt);
\draw[fill] (D) circle (3pt);
\draw[fill] (E) circle (3pt);
\draw[fill] (F) circle (3pt);
\draw[fill] (G) circle (3pt);
\draw[fill] (H) circle (3pt);
\draw[fill] (I) circle (3pt);
\draw[fill] (J) circle (3pt);
\draw[fill] (K) circle (3pt);
\draw[fill] (L) circle (3pt);
\draw[fill] (M) circle (3pt);
\draw[fill] (N) circle (3pt);
\draw[fill] (O) circle (3pt);
\draw[white] (Z) circle (3pt);

\end{tikzpicture}
\caption{$T'$.}\label{3.14a}
\end{subfigure}
\begin{subfigure}{0.3\textwidth}
\centering

\begin{tikzpicture}[scale = 0.4]
\centering
\coordinate (A) at (0,0);
\coordinate (C) at (-4,-6);
\coordinate (D) at (1,1);
\coordinate (E) at (-3,-4);
\coordinate (F) at (-6,-6);
\coordinate (G) at (-1,-1);
\coordinate (H) at (-2,-2);
\coordinate (I) at (-3,-3);
\coordinate (J) at (-4,-4);
\coordinate (K) at (-5,-5);
\coordinate (L) at (-1,-3);
\coordinate (M) at (0,-1);
\coordinate (N) at (1,-1);
\coordinate (O) at (-2,-4);

\coordinate (B) at (2,2);

\draw (F)--(K)--(J)--(I)--(H)--(G)--(A)--(D)--(B);
\draw (H)--(L);
\draw (M)--(A)--(N);
\draw (E)--(I)--(O);
\draw (K)--(C);
\draw[fill] (A) circle (3pt);
\draw[fill] (B) circle (3pt);
\draw[fill] (C) circle (3pt);
\draw[fill] (D) circle (3pt);
\draw[fill] (E) circle (3pt);
\draw[fill] (F) circle (3pt);
\draw[fill] (G) circle (3pt);
\draw[fill] (H) circle (3pt);
\draw[fill] (I) circle (3pt);
\draw[fill] (J) circle (3pt);
\draw[fill] (K) circle (3pt);
\draw[fill] (L) circle (3pt);
\draw[fill] (M) circle (3pt);
\draw[fill] (N) circle (3pt);
\draw[fill] (O) circle (3pt);
\end{tikzpicture}
\caption{$T''$.}\label{3.14b}
\end{subfigure}
\begin{subfigure}{0.3\textwidth}
\centering

\begin{tikzpicture}[scale = 0.4]
\coordinate (A) at (0,0);
\coordinate (C) at (-1,-3);
\coordinate (D) at (1,1);
\coordinate (E) at (1,-1);
\coordinate (F) at (-6,-6);
\coordinate (G) at (-1,-1);
\coordinate (H) at (-2,-2);
\coordinate (I) at (-3,-3);
\coordinate (J) at (-4,-4);
\coordinate (K) at (-5,-5);
\coordinate (L) at (0,-2);
\coordinate (M) at (2,0);
\coordinate (N) at (1,0);
\coordinate (O) at (0,-1);
\coordinate (B) at (2,2);

\draw (F)--(K)--(J)--(I)--(H)--(G)--(A)--(D)--(B);
\draw (G)--(L);
\draw (M)--(D)--(N);

\draw (E)--(A)--(O);
\draw (H)--(C);

\draw[fill] (A) circle (3pt);
\draw[fill] (B) circle (3pt);
\draw[fill] (C) circle (3pt);
\draw[fill] (D) circle (3pt);
\draw[fill] (E) circle (3pt);
\draw[fill] (F) circle (3pt);
\draw[fill] (G) circle (3pt);
\draw[fill] (H) circle (3pt);
\draw[fill] (I) circle (3pt);
\draw[fill] (J) circle (3pt);
\draw[fill] (K) circle (3pt);
\draw[fill] (L) circle (3pt);
\draw[fill] (M) circle (3pt);
\draw[fill] (N) circle (3pt);
\draw[fill] (O) circle (3pt);
\end{tikzpicture}
\caption{$T'''$.}\label{3.14c}
\end{subfigure}
\caption{Three canonical trees on $15$ vertices with co-degree sequence $\gamma_{21}$. In \ref{3.14b}
and \ref{3.14c}, level $1$ has cardinality equal to $b_1 = 1$. In \ref{3.14c}, the levels are in decreasing order with respect to their cardinalities.}\label{3.14}
\end{figure}

\begin{lem}\label{3.5.10}
Given a binary sequence $\gamma_c$, if there exists a tree having $\gamma_c$ as its 
co-degree sequence, then there exists a canonical tree realizing $\gamma_c$ whose levels
satisfy:
\begin{itemize}
\item[$(1)$] $b_1 = 1$;
\item[$(2)$] $b_2 \geqslant b_3 \geqslant \cdots\geqslant  
b_\ell$, with $\ell$ being  the height of the tree.
\end{itemize}
\end{lem}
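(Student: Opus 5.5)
The plan is to start from an arbitrary tree $T$ with $\ga(T)=\gamma_c$ and apply three normalizations in turn, each preserving the co-degree sequence.

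\emph{Step 1: reach a canonical tree.} Root $T$ anywhere and apply left-shift operations repeatedly. Each one leaves the co-degree sequence unchanged by Theorem~\ref{3.5.5}. To show the process stops, I would use a potential. A left-shift moves $L(x)$ one level down or more: its new root $\nu_y$ is a leaf, and the leftmost leaf lies at level at least $x$ if we always take the leftmost leaf of maximal depth. So the quantity $\sum_{u} \mathrm{level}(u)$ strictly increases. It is also bounded by $\binom{v}{2}$, since a tree on $v$ vertices has height at most $v-1$. Hence the process terminates in a canonical tree $T_1$ with $\ga(T_1)=\gamma_c$. By Remark~\ref{3.5.6}, every level of $T_1$ is made of siblings, all children of the path vertex $r_{i-1}$.

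\emph{Step 2: force $b_1=1$.} If $b_1\geqslant 2$, choose a child $s\neq r_1$ of the root $r_0$ and declare $s$ the new root; the graph itself is unchanged. Since $s$ is a leaf adjacent to $r_0$, the new levels are $\{s\}$, then $\{r_0\}$, then the other $b_1-1$ children of $r_0$ together with $r_1$, then the old levels $2,\dots,\ell$ in order. I would check that the result is still canonical: each new level still consists of children of a single vertex, and the new leftmost path is $s,r_0,r_1,\dots,r_\ell$. In particular $b_1=1$ holds. As a consistency check, formula \eqref{3.10} gives the same value $c$: the new sequence is $(1,1,b_1,b_2,\dots,b_\ell)$ with first entry $1$, so $\sum\binom{\cdot}{2}$ is unchanged and $v-1-b_1$ becomes $v-2$. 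At first sight these disagree. Resolving this is the main obstacle, so I expect to recount carefully, taking new level $2$ to be the children of $r_0$ other than $s$.

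Redoing it with $s$ as root, the new level $1$ is $\{r_0\}$ and the new level $2$ consists of the $b_1-1$ other children of $r_0$, namely $r_1$ and its siblings. The new sequence is therefore $b'=(1,\,b_1-1,\,b_2,\dots,b_\ell)$ with height $\ell+1$. Then
$$\textstyle\binom{b_1-1}{2}+v-2=\binom{b_1}{2}-(b_1-1)+v-2=\binom{b_1}{2}+v-1-b_1,$$
which matches \eqref{3.10}, as it must since the graph is the same.

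\emph{Step 3: sort levels $2,\dots,\ell$.} Exchanging the vertex sets of two levels $i,j\geqslant 2$ means reattaching the siblings so that level $i$ hangs from $r_{i-1}$, with the path vertex chosen among them. This yields a canonical tree whose multiset $\{b_2,\dots,b_\ell\}$ is unchanged and whose $b_1$ is unchanged. By \eqref{3.10} the value of $c$ is the same. Sorting by such transpositions gives $b_2\geqslant\cdots\geqslant b_\ell$ while keeping $b_1=1$, which proves the lemma.
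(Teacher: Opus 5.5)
Your proposal is correct and is the same argument as the paper, whose proof simply invokes the three operations described just before the lemma: iterated left-shifts (Theorem~\ref{3.5.5}); re-rooting at a leaf child of $r_0$ other than $r_1$, which turns the level sequence into $(1,b_1-1,b_2,\dots,b_\ell)$ exactly as in your corrected count; and permuting levels $2,\dots,\ell$, justified by \eqref{3.10}. Your termination potential is a welcome addition that the paper omits, but note that strict increase holds because a non-leaf $\nu_x$ forces the deepest leaf to lie at level at least $x+1$, not merely $x$, and delete the false start $(1,1,b_1,\dots)$ in Step~2.
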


\begin{proof}
The statement is a direct consequence of the operations we described above.
\end{proof}

We conclude that the reconstruction problem 
consists in finding $b_2 \geqslant b_3 \geqslant \ldots\geqslant  b_\ell$ such that
$$c=\sum_{i=2}^\ell \binom{b_i}{2} +v-2.
$$
Notice that $\ell$ can still vary, and more solutions may exist, in general.

\subsection{Trees and partitions}

Let $T$ be a canonical tree of $v\geqslant 3$ vertices of height $\ell$ with 
$b_0=1,b_1,\ldots, b_\ell$ the cardinalities of its levels. 
By Lemma~\ref{3.5.10} we may assume $b_1=1$ and 
$b_2 \geqslant b_3 \geqslant \cdots\geqslant  
b_\ell$. So, the sequence $(b_2,b_3,\ldots,b_\ell)$ is a partition of $v-2$.
We can exploit some properties of the partitions of an integer to provide an answer to Problem~\ref{4.5}.

Given a partition $\alpha=(\alpha_1,\alpha_2,\ldots,\alpha_k)$ 
of a positive integer $m$, define
$$\bm{c}(\alpha)=m+\sum_{i=1}^k \binom{\alpha_i}{2}.$$

Given two partitions $\alpha=(\alpha_1,\alpha_2,\ldots,\alpha_k)$
and $\beta=(\beta_1,\beta_2,\ldots,\beta_h$) of a positive integer $m$, consider the \emph{dominance} order as follows. Set
$\alpha_{k+1}=\ldots=\alpha_m=0$ and $\beta_{h+1}=\ldots=\beta_m=0$.
Write $\alpha \unlhd\beta$ if
$$\sum_{i=1}^t \alpha_i \leqslant \sum_{i=1}^t \beta_i
\quad \text{ for all } 1\leqslant t \leqslant m;$$
write $\alpha \lhd\beta$ if  $\alpha \unlhd\beta$ and $\alpha \neq \beta$.
Note that the dominance order is a partial order relation on the set of all partitions of $m$. We say that two partitions $\alpha,\beta$ of $m$ are \emph{comparable} if either $\alpha \unlhd \beta$ or $\beta \unlhd \alpha$.

To any  partition $\alpha=(\alpha_1,\alpha_2,\ldots,\alpha_k)$ of a positive integer $m$, we can associate the partition $\alpha'=(\alpha_1',\alpha_2',\ldots,\alpha_{\alpha_1}')$ of $m$, where $\alpha_i' = |\{\alpha_j\geqslant i \}|$ for all $i=1,\ldots,\alpha_1$. 
For instance, $(5,4,4,2,1,1)' = (6,4,3,3,1)$.

\begin{lem}\cite[Lemma 1.4.11]{JK}\label{conj}
    Let $\alpha,\beta$ be two partitions of a positive integer $m$.
    If $\alpha \lhd \beta$, then $\beta'\lhd \alpha'$.
\end{lem}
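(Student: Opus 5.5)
The statement to prove is Lemma~\ref{conj}: if $\alpha \lhd \beta$ are partitions of $m$, then $\beta' \lhd \alpha'$. We may assume $\alpha\neq\beta$; then $\alpha'\neq\beta'$, because conjugation is an involution ($\alpha''=\alpha$). So it suffices to show $\beta'\unlhd\alpha'$, namely
$$\sum_{i=1}^t \beta'_i\leqslant \sum_{i=1}^t \alpha'_i \quad\text{for every } t,$$
with the conjugate partitions padded by zeros as in the definition of dominance.

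The key identity comes from reading the Young diagram by columns:
$$\sum_{i=1}^t \alpha'_i=\sum_{j}\min(\alpha_j,t).$$
Indeed, the first $t$ columns of the diagram contain exactly $\min(\alpha_j,t)$ boxes of row $j$. Since $\sum_j\alpha_j=m$, we get
$$\sum_{i=1}^t\alpha'_i = m-\sum_j(\alpha_j-t)^+ .$$
The inequality we need is therefore equivalent to
$$\sum_j(\alpha_j-t)^+\leqslant \sum_j(\beta_j-t)^+ \quad\text{for all } t\geqslant 0.$$

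To prove this, fix $t$ and let $s$ be the number of indices $j$ with $\alpha_j> t$. These are the first $s$ parts, since $\alpha$ is non-increasing. Then, using $\alpha\unlhd\beta$ in the first inequality,
$$\sum_j(\alpha_j-t)^+=\sum_{j=1}^s\alpha_j - st\leqslant \sum_{j=1}^s\beta_j-st=\sum_{j=1}^s(\beta_j-t)\leqslant\sum_{j=1}^s(\beta_j-t)^+\leqslant \sum_j(\beta_j-t)^+ .$$
This gives $\beta'\unlhd\alpha'$, and combined with $\alpha'\neq\beta'$ it yields $\beta'\lhd\alpha'$.

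The only step requiring care is the column-counting identity $\sum_{i\le t}\alpha'_i=\sum_j\min(\alpha_j,t)$. It follows directly from $\alpha'_i=|\{j:\alpha_j\geqslant i\}|$ by exchanging the order of summation: $\sum_{i\le t}\sum_j[\alpha_j\geqslant i]=\sum_j\min(\alpha_j,t)$. Everything else is a routine manipulation of the defining inequalities. Alternatively, the cited reference \cite{JK} could be quoted directly.
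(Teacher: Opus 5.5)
Your proof is correct, and it is self-contained: the paper gives no argument for this lemma and simply quotes it from \cite[Lemma 1.4.11]{JK}.

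Your route is direct. The column-counting identity $\sum_{i\leqslant t}\alpha'_i=\sum_j\min(\alpha_j,t)$ turns the required inequalities for the conjugates into $\sum_j(\alpha_j-t)^+\leqslant\sum_j(\beta_j-t)^+$. Each of these follows from a single inequality of $\alpha\unlhd\beta$, taken at the index $s=|\{j:\alpha_j>t\}|$. Since $0\leqslant s\leqslant m$, this index lies within the range of the paper's definition.

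The one fact you use without proof, $\alpha''=\alpha$, is a one-line check. Since $\alpha$ is non-increasing, $|\{k:\alpha_k\geqslant i\}|\geqslant j$ holds if and only if $\alpha_j\geqslant i$, hence $\alpha''_j=\alpha_j$.

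An alternative, closer in spirit to the paper's proof of Lemma~\ref{part}, is to pass to a chain of covering pairs and use Lemma~\ref{domstretta}. In either case of that lemma, $\beta$ arises from $\alpha$ by moving one box of the Young diagram from row $k$ to a row $j<k$. Transposing, the diagram of $\beta'$ arises from that of $\alpha'$ by moving one box from row $\alpha_k$ to row $\alpha_j+1>\alpha_k$. So $\beta'\lhd\alpha'$, and transitivity concludes.

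That argument is more pictorial but relies on the classification of covering relations. Yours uses nothing beyond the definitions and handles every $t$ at once.
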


Given two partitions $\alpha,\beta$ of a positive integer $m$, write $\alpha \prec \beta$ if $\alpha \lhd \beta$ and there is no partition $\rho$ of $m$ such that $\alpha \lhd \rho \lhd \beta$. For the following result, see \cite[Proposition 2.3]{Br} or \cite[Theorem 1.4.10]{JK}.

\begin{lem}\label{domstretta}
Let $\alpha=(\alpha_1 , \ldots, \alpha_m)$ and 
$\beta = (\beta_1 , \ldots, \beta_m)$ be partitions of a positive integer $m$, allowing the last parts to be zero.
Then $\alpha \prec\beta$ if and only if  at least one of the following cases occurs:
\begin{itemize}
    \item[$(1)$] There exists $j \in [1,m]$ such that
    $\beta_j = \alpha_j+1$, $\beta_{j+1} = \alpha_{j+1} - 1$ and $\alpha_i=\beta_i$ for all 
    $i\not \in \{j,j+1\}$.
\item[$(2)$] There exist $j,k \in [1,m]$ such that $j<k$, $\beta_j=\alpha_k+1$,
$\beta_k=\alpha_k-1$, $\alpha_j=\alpha_k$ and $\alpha_i=\beta_i$ for all
$i \not \in \{j,k\}$.
\end{itemize}
\end{lem}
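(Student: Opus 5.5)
The plan is to treat the two directions separately. Both rest on one reformulation: $\alpha \unlhd \beta$ is a statement about the partial sums $S_\alpha(t)=\sum_{i\le t}\alpha_i$ and $S_\beta(t)$. In each of the situations $(1)$ and $(2)$, $\beta$ is obtained from $\alpha$ by moving a single box from row $k$ (with $k=j+1$ in case $(1)$) up to row $j$. Thus $S_\beta(t)-S_\alpha(t)$ equals $1$ for $t\in[j,k-1]$ and $0$ otherwise.

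\emph{Sufficiency.} Assume $(1)$ or $(2)$, and let $\rho$ be a partition of $m$ with $\alpha \unlhd \rho \unlhd \beta$. Write $\delta_t=S_\rho(t)-S_\alpha(t)$. Then $\delta_t\in\{0,1\}$ for $t\in[j,k-1]$, $\delta_t=0$ elsewhere, and $\rho_i=\alpha_i+\delta_i-\delta_{i-1}$.
\begin{itemize}
\item In case $(1)$ the interval $[j,k-1]$ is the single point $j$, so $\rho\in\{\alpha,\beta\}$ immediately.
\item In case $(2)$ we have $\alpha_j=\cdots=\alpha_k=c$. If $\delta$ jumps from $1$ to $0$ inside $[j,k-1]$, then some row of $\rho$ equals $c-1$ and is followed by a row equal to $c$ or $c+1$. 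If $\delta$ jumps from $0$ to $1$ after position $j$, then a row equal to $c+1$ is preceded by a row of size at most $c$.
\end{itemize}
Either jump contradicts monotonicity of $\rho$. Hence $\delta$ is constant on $[j,k-1]$, so $\rho\in\{\alpha,\beta\}$ and $\alpha\prec\beta$.

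\emph{Necessity.} Let $\alpha\lhd\beta$; I construct an explicit $\rho$ with $\alpha\lhd\rho\unlhd\beta$.
\begin{itemize}
\item Let $j$ be the first index with $\alpha_j\neq\beta_j$; then $\alpha_j<\beta_j$, and $\alpha_{j-1}=\beta_{j-1}\ge\beta_j>\alpha_j$.
\item Let $k>j$ be the first index with $\alpha_k>\beta_k$, which exists because the total sums agree.
\item Let $k'\ge k$ be the last index with $\alpha_{k'}=\alpha_k$.
\end{itemize}
Define $\rho$ from $\alpha$ by adding one box to row $j$ and removing one from row $k'$. By the choice of $j$ and $k'$, $\rho$ is again non-increasing, and clearly $\alpha\lhd\rho$.

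To get $\rho\unlhd\beta$ I must show $S_\beta(t)-S_\alpha(t)\ge1$ for $t\in[j,k'-1]$.
\begin{itemize}
\item For $t\in[j,k-1]$ this holds because $\alpha_i\le\beta_i$ on this range, with strict inequality at $i=j$.
\item For $t\in[k,k']$ we have $\alpha_i=\alpha_k>\beta_k\ge\beta_i$. So the difference $S_\beta(t)-S_\alpha(t)$ strictly decreases on this range and is still $\ge 0$ at $t=k'$, which forces it to be $\ge1$ for $t<k'$.
\end{itemize}
If $\alpha\prec\beta$ we must therefore have $\rho=\beta$, so $\beta$ differs from $\alpha$ by one box moved from $k'$ to $j$.

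It remains to show this single move has the form $(1)$ or $(2)$. If $k'=j+1$ we are in case $(1)$. If $\alpha_j=\alpha_{k'}$, we are in case $(2)$ with $k'$ in place of $k$. Otherwise $k'>j+1$ and $\alpha_j>\alpha_{k'}$. In that case I produce an intermediate partition, by moving the box from row $k'$ only up to a suitable intermediate row: the first row $i$ with $\alpha_i=\alpha_{j+1}$ if $\alpha_{j+1}>\alpha_{k'}$, or suitably otherwise. This contradicts the covering relation.

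This last step is where I expect the main obstacle: choosing the intermediate row so that the result is a genuine partition strictly between $\alpha$ and $\beta$. Reusing the same partial-sum computation as above should settle it.
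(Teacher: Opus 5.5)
Your sufficiency argument is complete, and so is the first half of necessity. The choice of $j$, $k$, $k'$, the check that $\rho$ (one box moved from row $k'$ to row $j$) is a partition, and the partial-sum estimate giving $\alpha\lhd\rho\unlhd\beta$ are all correct, so $\alpha\prec\beta$ does force $\beta=\rho$. The paper gives no proof of this lemma; it only cites Brylawski and James--Kerber, so a self-contained argument like yours is a legitimate alternative.

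The gap is the last step. You leave it open, and the strategy you indicate for it does not work. You propose to keep the source row $k'$ and move the box only up to some row $i$ strictly between $j$ and $k'$.
\begin{itemize}
\item When $\alpha_j=\alpha_{j+1}$, your recipe ``first row $i$ with $\alpha_i=\alpha_{j+1}$'' returns $i=j$, because $\alpha_{j-1}>\alpha_j$. That just gives back $\beta$.
\item When $\alpha_j=\cdots=\alpha_{k'-1}>\alpha_{k'}$, no intermediate target row exists at all, since adding a box to any row of a constant block other than its first breaks monotonicity.
\end{itemize}
A concrete instance is $\alpha=(2,2,1)$, $\beta=(3,2,0)$. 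Your construction gives $j=1$, $k=k'=3$, $\rho=\beta$, and you land in exactly this final case. The only partition strictly between them is $(3,1,1)$, which keeps the target row $j$ and changes the source row.

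The missing idea is to change either end of the move, depending on whether $\alpha_{j+1}=\alpha_j$. Let $p$ be the last index with $\alpha_p=\alpha_j$; since $\alpha_{k'}<\alpha_j$, we have $p<k'$.
\begin{itemize}
\item If $p>j$, obtain $\rho'$ from $\alpha$ by moving one box from row $p$ to row $j$. This is a partition because $\alpha_{j-1}>\alpha_j$ and $\alpha_p>\alpha_{p+1}$, and $S_{\rho'}-S_\alpha$ is the indicator of $[j,p-1]$.
\item If $p=j$, i.e.\ $\alpha_j>\alpha_{j+1}$, move one box from row $k'$ to row $j+1$. This is a partition because $\alpha_j>\alpha_{j+1}$ and $\alpha_{k'}>\alpha_{k'+1}$, and $S_{\rho'}-S_\alpha$ is the indicator of $[j+1,k'-1]$, which is nonempty since $k'>j+1$.
\end{itemize}
In both cases the support of $S_{\rho'}-S_\alpha$ is a nonempty proper subset of $[j,k'-1]$. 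Hence $\alpha\lhd\rho'\lhd\beta$, contradicting $\alpha\prec\beta$. With this step inserted, your proof is complete.
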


\begin{lem}\label{part}
Let $\alpha,\beta$ be two partitions of a positive integer $m$. 
If $\alpha \lhd \beta$ then $\bm{c}(\alpha)< \bm{c}(\beta)$. Moreover, if $\alpha$ and $\beta$ are comparable and
$\bm{c}(\alpha)< \bm{c}(\beta)$, then $\alpha \lhd \beta$. 
\end{lem}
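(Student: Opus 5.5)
The first assertion reduces to covering relations, and the second is then almost formal. Since the dominance order is a finite partial order, whenever $\alpha \lhd \beta$ there is a saturated chain
$$\alpha=\rho_0\prec\rho_1\prec\cdots\prec\rho_s=\beta,\qquad s\geqslant 1,$$
of partitions of $m$. It therefore suffices to show $\bm{c}(\rho)<\bm{c}(\sigma)$ whenever $\rho\prec\sigma$; transitivity of $<$ then gives $\bm{c}(\alpha)<\bm{c}(\beta)$.

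To handle $\rho\prec\sigma$, I would use Lemma~\ref{domstretta}. The term $m$ in $\bm{c}$ is common to all partitions of $m$, so only $\sum_i\binom{\cdot}{2}$ matters. Also, zero parts contribute $\binom{0}{2}=0$, so padding with zeros is harmless. In both cases of Lemma~\ref{domstretta}, $\sigma$ arises from $\rho$ by taking one unit from a part of size $a$ and adding it to a part of size $b$, with $b\geqslant a$ and all other parts unchanged.
\begin{itemize}
\item[(1)] In case $(1)$, $b=\rho_j\geqslant\rho_{j+1}=a$.
\item[(2)] In case $(2)$, $b=\rho_j=\rho_k=a$.
\end{itemize}
The change in the sum is
$$\binom{b+1}{2}-\binom{b}{2}-\left(\binom{a}{2}-\binom{a-1}{2}\right)=b-(a-1)=b-a+1\geqslant 1>0.$$
Here $a\geqslant 1$ because $\sigma$ has nonnegative parts. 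So every covering step strictly increases $\bm{c}$, which proves the first assertion.

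For the second assertion, suppose $\alpha$ and $\beta$ are comparable and $\bm{c}(\alpha)<\bm{c}(\beta)$. Then $\alpha\neq\beta$, so either $\alpha\lhd\beta$ or $\beta\lhd\alpha$. If $\beta\lhd\alpha$, the first part would give $\bm{c}(\beta)<\bm{c}(\alpha)$, a contradiction. Hence $\alpha\lhd\beta$.

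The only point needing care is reading off from Lemma~\ref{domstretta} that the unit always moves from a part that is not larger to a part that is not smaller, so that $b\geqslant a$. This holds in case $(1)$ by monotonicity of $\rho$ and in case $(2)$ by $\rho_j=\rho_k$. Everything else is routine.
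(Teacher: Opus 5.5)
Your proposal is correct and follows essentially the same route as the paper: reduce to covering relations $\alpha\prec\beta$, use the two cases of Lemma~\ref{domstretta} to show that $\bm{c}$ strictly increases, and deduce the converse by contradiction. Your unified increment $b-a+1$ is just a repackaging of the paper's two separate computations, $\alpha_j-\alpha_{j+1}+1$ in case $(1)$ and $1$ in case $(2)$.
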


\begin{proof}
The first part of the statement follows once we prove it for $\alpha \prec\beta$. By Lemma~\ref{domstretta} we have two cases. 
    
If case $(1)$ of Lemma~\ref{domstretta} holds, then there exists an index $j \in [1,m]$ such that $\beta_j = \alpha_j+1$, $\beta_{j+1} = \alpha_{j+1} - 1$ and $\alpha_i=\beta_i$ for all $i\not \in \{j,j+1\}$. Then $\bm{c}(\beta) - \bm{c}(\alpha) = \binom{\alpha_j + 1}{2} + \binom{\alpha_{j+1} - 1}{2} - \binom{\alpha_j}{2} - \binom{\alpha_{j+1}}{2} = \alpha_j - \alpha_{j+1} + 1$. Since $\alpha_j \geqslant \alpha_{j+1}$, we obtain that $\bm{c}(\beta) - \bm{c}(\alpha) > 0$.

If case $(2)$ of the lemma holds, then there exist $j,k $ such that $1\leqslant j<k\leqslant m$, $\beta_j=\alpha_k+1$, $\beta_k=\alpha_k-1$, $\alpha_j=\alpha_k$ and $\alpha_i=\beta_i$ for all $i \not \in \{j,k\}$. Then $\bm{c}(\beta) - \bm{c}(\alpha) = \binom{\alpha_k+1}{2}+\binom{\alpha_k-1}{2}-2 \binom{\alpha_k}{2} =1>0$.

Now, suppose that $\alpha$ and $\beta$ are two comparable partitions of $m$ such that $\bm{c}(\alpha)< \bm{c}(\beta)$. 
By the above, if $\beta \unlhd \alpha$ then $\bm{c}(\beta) \leqslant \bm{c}(\alpha)$, a contradiction. Hence, $\alpha\lhd \beta$.   
\end{proof}

Note that, taking  for instance the two partitions $\alpha=(4,1,1,1,1)$ and $\beta=(3,3,2)$ of $m=8$, we have $\bm{c}(\alpha)=14 < \bm{c}(\beta)=15$, but the two partitions are not comparable.
    
Going back to co-degree sequences and to Problem~\ref{4.5}, by Lemma~\ref{3.5.10} a solution to $T(v,c)$ exists if and only if there exists a partition $\alpha=(\alpha_1,\ldots,\alpha_k)$ of $v-2$ such that $\bm{c}(\alpha)=c$.
If so, one can take as solution the canonical tree $T_\alpha$ on $v$ vertices of height $k+1$, with $b_0=1,b_1=1,b_2=\alpha_1,\ldots,b_{k+1}=\alpha_k$ the cardinalities of its levels.
Note that $\c(T_\alpha)=\bm{c}(\alpha)$.

There are several algorithms for constructing all the partitions of a given integer, see \cite{Mi,ZS}. Using \textsc{gap}, we compute for each $v \leqslant 100$ all the values of $c \in \left[v-2,\binom{v-1}{2} \right]$ such that $\gamma_c$ is not the co-degree sequence of a tree on $v$ vertices, see Table~\ref{tab:c} for a partial list. The code is as follows.
\begin{verbatim}
for v in [3..100] do
 P:=Partitions(v-2);; 
 ok:=Set(P, p-> v-2 + Sum(p,k->Binomial(k,2)));
 d:=Difference([v-1..Binomial(v-1,2)],ok);
 Print("v = ",v,": Missing values of c = ",d,"\n");
od;
\end{verbatim}

For a reconstruction algorithm for an admissible value of $c$, see \cite[Section~3.5.3]{Asc}.

\begin{table}[ht]
$$\begin{array}{|c|l|}\hline
v & \text{Missing values of } c\\ \hline
 3 & -\\
 4 & - \\
 5 &  5 \\
 6 &  8, 9 \\
 7 &  10, 12, 13, 14 \\
 8 &  11, 14, 15, 17, 18, 19, 20 \\
 9 &  15, 19, 20, 21, 23, 24, 25, 26, 27 \\
 10 &  22, 25, 26, 27, 28, 30, 31, 32, 33, 34, 35 \\
 11 &  23, 26, 28, 29, 32, 33, 34, 35, 36, 38, 39, 40, 41, 42, 43, 44 \\
 12 &  29, 33, 35, 36, 37, 40, 41, 42, 43, 44, 45, 47, 48, 49, 50, 51,  52, 53, 54 \\
 13 &  37, 41, 43, 44, 45, 46, 49, 50, 51, 52, 53, 54, 55, 57, 58, 59,      60, 61, 62, 63, 64, 65 \\
14 &  38, 44, 45, 47, 50, 52, 53, 54, 55, 56, 59, 60, 61, 62, 63, 64, 65, 66, 68, 69, 70, 71, \\
& 72, 73, 74, 75, 76, 77       \\
15 &  46, 48, 53, 54, 56, 57, 60, 62, 63, 64, 65, 66, 67, 70, 71, 72, 
      73, 74, 75, 76, 77, 78,\\ 
      &  80, 81, 82, 83, 84, 85, 86, 87, 88, 89, 90 \\
 16 &  47, 55, 58, 63, 64, 66, 67, 68, 71, 73, 74, 75, 76, 77, 78, 79, 
      82, 83, 84, 85, 86, 87,\\
      & 88, 89, 90, 91, 93, 94, 95, 96, 97, 98, 99, 
      100, 101, 102, 103, 104 \\\hline
\end{array}$$
\caption{The values of $c$ such that the 
co-degree sequence $\gamma_c$ cannot be realized by a tree, for $3 \leqslant  v \leqslant  16$ and $v-2 \leqslant c
\leqslant \binom{v-1}{2}$.}\label{tab:c}
\end{table}

We would like to understand which are the values of $c$ that can be realized by a tree. The following result provides an answer for the largest values. 

\begin{prop}\label{no tree}
Assume $v\geqslant 6$. 
There is no tree $T$ on $v$ vertices such that
$$\c(T)\in \left[\binom{v-3}{2}+4, \binom{v-2}{2} \right]\cup 
\left[ \binom{v-2}{2}+2, \binom{v-1}{2}-1\right].$$
\end{prop}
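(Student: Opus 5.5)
By Lemma~\ref{3.5.10} and the discussion following Lemma~\ref{part}, a tree $T$ on $v$ vertices with $\c(T)=c$ exists if and only if there is a partition $\alpha=(\alpha_1,\ldots,\alpha_k)$ of $m=v-2$ with $\bm{c}(\alpha)=c$. I will therefore classify partitions of $m$ by their largest part $\alpha_1$ and bound $\bm{c}(\alpha)$ in each class. The aim is to show that every partition has $\bm{c}(\alpha)$ in one of three sets: at most $\binom{v-3}{2}+3$, exactly $\binom{v-2}{2}+1$, or exactly $\binom{v-1}{2}$. These three sets avoid both forbidden intervals.

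\emph{Case $\alpha_1=m$.} Here $\alpha=(m)$, and $\bm{c}(\alpha)=m+\binom{m}{2}=\binom{m+1}{2}=\binom{v-1}{2}$. This is the star.

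\emph{Case $\alpha_1=m-1$.} Here $\alpha=(m-1,1)$, and $\bm{c}(\alpha)=m+\binom{m-1}{2}$. Write $m+\binom{m-1}{2}=\binom{m}{2}+1$; this identity holds because $\binom{m}{2}-\binom{m-1}{2}=m-1$. So $\bm{c}(\alpha)=\binom{v-2}{2}+1$. It lies strictly between the two intervals: above $\binom{v-2}{2}$ and below $\binom{v-2}{2}+2$.

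\emph{Case $\alpha_1\leqslant m-2$.} Every such partition is dominated by $(m-2,2)$, since all partial sums of $\alpha$ are at most those of $(m-2,2)$. To check this, note that the first partial sum is $\alpha_1\leqslant m-2$, and every later partial sum of $(m-2,2)$ equals $m$. By Lemma~\ref{part}, $\bm{c}(\alpha)\leqslant \bm{c}((m-2,2))$. We compute
\[
\bm{c}((m-2,2))=m+\binom{m-2}{2}+1 .
\]
Since $\binom{m-1}{2}-\binom{m-2}{2}=m-2$, this equals $\binom{m-1}{2}+3=\binom{v-3}{2}+3$. Hence in this case $\bm{c}(\alpha)\leqslant\binom{v-3}{2}+3$, which is below the first forbidden interval.

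The hypothesis $v\geqslant 6$, i.e. $m\geqslant 4$, is needed so that $(m-2,2)$ is a genuine partition ($m-2\geqslant 2$) and the intervals are meaningful. The only delicate step is the domination claim in the last case; the partial-sum check above settles it. An alternative is to compare directly using the convexity of $\binom{x}{2}$, which shows that concentrating mass into the largest part increases $\bm{c}$.

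Combining the three cases, no partition of $v-2$ has $\bm{c}(\alpha)$ in either forbidden interval. Hence no tree $T$ on $v$ vertices has $\c(T)$ in the stated set.
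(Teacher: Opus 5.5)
Your proof is correct and follows the paper's argument. The paper also reduces to partitions of $v-2$, shows that every partition other than $(v-2)$ and $(v-3,1)$ is dominated by $(v-4,2)$, and concludes with Lemma~\ref{part} and the three explicit values of $\bm{c}$. Your direct partial-sum check of the domination replaces the paper's appeal to Lemma~\ref{domstretta}, and the lower half of the paper's chain is indeed unnecessary for this statement.
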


\begin{proof}
Let $\alpha$ be a partition of  $v-2$.
Applying Lemma~\ref{domstretta}, one can easily see that, if 
$$\alpha \not \in \{ (1^{v-2}), (2,1^{v-4}),(2^2,1^{v-6}),
  (v-4,2),(v-3,1),(v-2)\},$$ then 
\begin{equation}\label{prec}
(1^{v-2})\prec (2,1^{v-4})\prec(2^2,1^{v-6})\lhd \alpha \lhd (v-4,2)\prec (v-3,1)\prec(v-2).
\end{equation}
On the other hand, we have the following values for $\bm{c}(\alpha)$:
$$\begin{array}{|c|c|}\hline
 \alpha &  \bm{c}(\alpha)\\\hline & \\[-10pt]
   (v-2)  & (v^2-3v+2)/2=\binom{v-1}{2}\\[2pt]
   (v-3,1) & (v^2-5v+8)/2=\binom{v-2}{2}+1\\[2pt]
   (v-4,2) & (v^2-7v+18)/2=\binom{v-3}{2}+3\\
   (2^2, 1^{v-6}) & v \\
   (2,1^{v-4}) & v-1 \\
   (1^{v-2}) & v-2 \\\hline
   \end{array}$$
The statement now follows from \eqref{prec} and 
Lemma~\ref{part}.
\end{proof}

The argument exploited in the proof is not effective for smaller values of $c$.

In the next section we will consider the more general question of solving Problem~\ref{problema} for $C_4$-free graphs.

\section{Non-admissible values for the function \texorpdfstring{$\c$}{c}}\label{planari}

The function $\c: \mathcal{Q}_{v} \to \left[0, \binom{v}{2} \right]$ is not surjective, so we would like to determine its image.
Using \textsc{magma} and \textsc{nauty} we obtain the values of $c \in \left[0,\binom{v}{2} \right]$ such that  $\gamma_c$ is not the co-degree sequence of a simple graph on $v$ vertices for $2\leqslant v \leqslant 16$, see Table~\ref{vQv}.
Clearly, the set $\mathcal{Q}_v$ contains the set of all trees on $v$ vertices. So,
we first consider the values of $c$ provided by Theorem~\ref{Mic} concerning trees.
The next result allows us to ``fill the gaps'' of Lemma~\ref{no tree} and some of Table~\ref{tab:c}.

\begin{table}[htbp]
\begin{center}
\begin{tabular}{|c|c||c|c|}\hline
$v$ & $|\mathcal{Q}_v|$ & Missing values of $c$ \\\hline
 $2$ & \numprint{2}  & $1$   \\
$3$  & \numprint{4}  & $2$ \\
$4$ & \numprint{8} & $4,6$  \\
$5$ & \numprint{18}&  $9$ \\
$6$ & \numprint{44} &  $11,13,15$  \\
$7$ & \numprint{117} &  $18,20$\\
$8$ & \numprint{351} & $24,26,28$ \\
$9$ & \numprint{1230} & $31, 33, 35$ \\
$10$ & \numprint{5069} & $37, 39, 41, 43, 45$\\
$11$ & \numprint{25181} & $48, 50, 52, 54$ \\
$12$ &  \numprint{152045} & $56, 58, 60, 62, 64, 66$ \\
$13$ & \numprint{1116403} & $69, 71, 73, 75, 77$ \\
$14$ & \numprint{9899865} & $79, 81, 83, 85, 87, 89, 91$\\
$15$ & \numprint{104980369} & $94, 96, 98, 100, 102, 104$\\
$16$ & \numprint{1318017549} & 
$106, 108, 110, 112, 114, 116, 118, 120$\\\hline
\end{tabular}
\end{center}
\caption{The values of $c \in \left[0,\binom{v}{2} \right]$ such that  $\gamma_c$ is not the co-degree sequence of a simple graph on $v$ vertices for $2\leqslant v \leqslant 16$.}\label{vQv}
\end{table}

\begin{prop}\label{daisy}
Let $v\geqslant 4$. For any $c \in \left[v-2,\binom{v-1}{2} \right]$, there exists a (connected planar) graph $G \in \mathcal{Q}_v$ such that $\c(G)=c$. 
\end{prop}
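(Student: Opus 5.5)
I plan to prove this by induction on $v$. The bottom part of the interval will come from the inductive hypothesis by adding a pendant vertex. The top part will be covered directly by an explicit family of "friendship-like" planar graphs, all contained in $\mathcal{Q}_v$.

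\emph{Step 1 (pendant extension).} If $G\in\mathcal{Q}_{v-1}$ is connected and planar and $y$ is a vertex of degree $1$ in $G$, let $G'$ be obtained by adding a new vertex $w$ adjacent only to $y$. Then $G'$ is still connected, planar and $C_4$-free. By \eqref{cd}, the only change in the cherry count is that $\deg(y)$ goes from $1$ to $2$, so $\c(G')=\c(G)+1$. Every graph produced in the induction should have a vertex of degree $1$; for the base case and the explicit family below this is checked directly. With that, the inductive hypothesis for $v-1$ gives all values in $\left[v-2,\binom{v-2}{2}+1\right]$ for $v$.

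\emph{Step 2 (top range, explicit family).} It remains to cover $\left[\binom{v-2}{2}+2,\binom{v-1}{2}\right]$, an interval of length $v-3$. The value $\binom{v-1}{2}$ is realized by the star. For the rest, take a center $x$ adjacent to $v-2$ vertices, and one further vertex $w$ adjacent only to a fixed neighbour $y$ of $x$. This graph $H_0$ is a tree with $\c(H_0)=\binom{v-2}{2}+1$. Now add a matching of $t$ edges among the neighbours of $x$.
\begin{itemize}
\item An edge avoiding $y$ raises the degrees of two vertices from $1$ to $2$, adding $2$ to $\c$.
\item An edge $ya$ raises $\deg(y)$ from $2$ to $3$ (adding $2$) and $\deg(a)$ from $1$ to $2$ (adding $1$), adding $3$ in total.
\end{itemize}
Each matching edge creates a triangle through $x$, and these triangles pairwise meet only in $x$. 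Hence no $4$-cycle appears: $w$ has only the neighbour $y$, and two leaves of $x$ have common neighbour only $x$ or their partner. The graph is also clearly planar, being a friendship graph with pendant trees attached. Thus we obtain $\binom{v-2}{2}+1+2t$ and $\binom{v-2}{2}+2+2t$, for $t$ ranging up to roughly $\lfloor (v-3)/2\rfloor$. These values fill both parities up to offset about $v-2$ above $\binom{v-2}{2}$, which reaches $\binom{v-1}{2}$. For the induction I also want such graphs to keep a leaf; this holds as long as not all neighbours of $x$ are matched, and $w$ is always a leaf anyway.

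\emph{Step 3 (base cases and bookkeeping).} I would verify $v=4$ (and perhaps $v=5$) by hand using Figure~\ref{4}: the path gives $2$ and the star gives $3$. Then I would check that the offsets from Step 2 exactly reach $v-2$ with the available number of disjoint pairs among the $v-2$ neighbours of $x$ (with $y$ used at most once). The main obstacle I expect is this endpoint arithmetic: making sure the largest offsets, near $\binom{v-1}{2}-1$, are attainable for both parities of $v$ without exceeding the number of available leaves. If the count falls one short, I would also allow $w$ to be joined to $x$ instead of $y$, turning the graph into a star plus a matching, which gives the values $\binom{v-1}{2}+2t$ trivially. Alternatively, I would use the last available pair to adjust by $1$ via the $+3$ edge at $y$.
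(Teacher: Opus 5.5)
\textbf{The gap is the value $\binom{v-2}{2}+2$, which neither of your steps produces.} Step~1 is fine. The top end you worry about in Step~3 is also fine: with $r\leqslant\lfloor (v-3)/2\rfloor$ and $s\leqslant\lfloor (v-4)/2\rfloor$ you reach every offset from $3$ to $v-2$ above $\binom{v-2}{2}$. The failure is at the bottom of Step~2:
\begin{itemize}
\item An edge $ya$ adds $3$ to $\c(H_0)=\binom{v-2}{2}+1$, so your second family starts at $\binom{v-2}{2}+4$. Your expression $\binom{v-2}{2}+2+2t$ is valid only for $t\geqslant 1$.
\item Step~1 stops at $\binom{v-2}{2}+1$.
\item The star-plus-matching fallback only gives values $\geqslant\binom{v-1}{2}$.
\end{itemize}
Tweaking the family cannot fix this. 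If $\deg x=v-2$, the $C_4$-free condition forces the unique non-neighbour of $x$ to be a pendant vertex at some $y\in N(x)$, and forces $N(x)$ to induce a matching. So every such graph is one of your Step~2 graphs, with value $\binom{v-2}{2}+1+2r$ or $\binom{v-2}{2}+2+2s$ with $s\geqslant 1$. You need a graph with $\Delta\leqslant v-3$. Already at $v=5$, the only connected graph with $\c=5$ is $C_5$: a vertex of degree $4$ alone contributes $6$, and a vertex of degree $3$ forces degrees $(3,2,2,1,1)$, whose sum is odd. Your scheme never produces $C_5$, and $C_5$ has no leaf.

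\textbf{The hole propagates, and here is one way to repair it.} Because Step~1 needs a realization with a leaf of every value at $v-1$, missing values carry forward. At $v=6$ your scheme yields only $\{4,5,7,9,10\}$, missing $6$ and $8$. Within your framework:
\begin{itemize}
\item Take $C_5$ by hand for $v=5$.
\item Supply $c=6$ at $v=6$ by the double star with two adjacent centres, each carrying two leaves.
\item For every $v\geqslant 6$, add a leafed realization of $\binom{v-2}{2}+2$ with a hub $x$ of degree $v-3$.
\begin{itemize}
\item For $v$ even: take a path $x,y,z_1,z_2$ with $z_1,z_2\notin N(x)$, an edge from $y$ to another neighbour of $x$, and $(v-6)/2$ further matching edges. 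This gives $\binom{v-3}{2}+3+1+1+(v-6)$.
\item For $v$ odd: take a triangle $xy_1y_2$, one pendant at each $y_i$, and $(v-7)/2$ matching edges among the other neighbours of $x$. This gives $\binom{v-3}{2}+6+(v-7)$.
\end{itemize}
Both equal $\binom{v-2}{2}+2$, are planar and $C_4$-free, and keep a leaf.
\end{itemize}

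\textbf{How the paper handles it.} The paper does not induct. It realizes the interval directly with the families $A_{a,r}$ and $B_{a,s}$, which are your Step~2 graphs but for every hub degree $a\in[2,v-2]$. It adds the trees of the partitions $(2j+2,2,1^{v-6-2j})$. For odd $v$ it adds the leafless graph $L_v$ (a $5$-cycle through the hub plus a perfect matching on the remaining hub neighbours), precisely to hit the value you are missing. For even $v$ that value comes from $B_{v-3,(v-6)/2}$, the even-$v$ graph in the repair above.
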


\begin{proof}
For $v=4$ it suffices to consider the path and the star graph on $4$ vertices (these two graphs give the values $c=2,3$, respectively, as required).
So, assume $v\geqslant 5$.
Given $2\leqslant a\leqslant v-2$, let
$$E_a=\{\{1,i\}: i \in [2,a+1]\}\cup\{\{a+k,a+k+1\}: k \in [1,v-a-1]\}.$$
For any  $r \in \left[0, \left\lfloor \frac{a-1}{2}\right\rfloor\right]$, let $A_{a,r}$ be the simple graph whose vertex set is $[1,v]$ and whose edge set is 
$E_a\cup \{\{2j,2j+1\}:  j \in [1,r]\}$.
For any  $s \in \left[0, \left\lfloor \frac{a-2}{2}\right\rfloor\right]$, let $B_{a,s}$ be the simple graph whose vertex set is $[1,v]$ and whose edge set is 
$E_a\cup \{\{a,a+1\}\}\cup \{\{2j,2j+1\}:  j \in [1,s]\}$.

Then $A_{a,r}$ and $B_{a,s}$ are connected $C_4$-free graphs on $v$ vertices, 
$\c(A_{a,r})=\binom{a}{2}+v-1-a +2r = \binom{a-1}{2}+v-2+2r$
and $\c(B_{a,s})=\binom{a}{2}+ v-1-a  +3+2s=\binom{a-1}{2} + v+1+2s$.

Set 
$$\begin{array}{rcl}
\Theta_v & =& \left\{
\c(A_{a,r}): a \in [2,v-2], \; 
r \in \left[0, \left\lfloor \frac{a-1}{2}\right\rfloor\right] \right\}\\[2pt]
&& \cup\left\{
\c(B_{a,s}): a \in [2,v-2],\;
s \in \left[0, \left\lfloor \frac{a-2}{2}\right\rfloor\right] \right\}
\end{array}
$$
and $\Theta_v'=\{\vartheta-(v-2): \vartheta \in \Theta_v\}$.

Taking, for all $0\leqslant b\leqslant \left\lfloor\frac{v-4}{2} \right\rfloor$,
$$\begin{array}{rcl}
\mathcal{A}_{2b+2} & =& \{\c(A_{2b+2,r})-(v-2):
0\leqslant r\leqslant b \}, \\
\mathcal{A}_{2b+3} & =& \{\c(A_{2b+3,r})-(v-2):
0\leqslant r\leqslant b+1 \},\\
\mathcal{B}_{2b+2} & =& \{\c(B_{2b+2,r})-(v-2):
0\leqslant r\leqslant b \}, \\
\mathcal{B}_{2b+3} & =& \{\c(B_{2b+3,r})-(v-2):
0\leqslant r\leqslant b \},
\end{array}$$
we have
$$\begin{array}{rclcl}
\mathcal{A}_{2b+2} & =& \left[
\binom{2b+1}{2},
\binom{2b+1}{2}+2b \right]_2 & = & \left[
\binom{2b+1}{2},
\binom{2b+2}{2}-1 \right]_2,\\[2pt]
\mathcal{A}_{2b+3} & =& \left[
\binom{2b+2}{2},
\binom{2b+2}{2}+2b+2\right]_2 & =& 
\left[
\binom{2b+2}{2},
\binom{2b+3}{2}\right]_2 ,\\[2pt]
\mathcal{B}_{2b+2} & =& \left[
\binom{2b+1}{2} + 3,
\binom{2b+1}{2} + 2b+3
\right]_2 & =& 
\left[
\binom{2b+1}{2} + 3,
\binom{2b+2}{2} + 2
\right]_2, \\[2pt]
\mathcal{B}_{2b+3} & =& \left[
\binom{2b+2}{2} + 3,
\binom{2b+2}{2} +2b+3 \right]_2 & =& 
\left[
\binom{2b+2}{2} + 3,
\binom{2b+3}{2} +1 \right]_2.
\end{array}$$
Hence, 
$\mathcal{A}_2\cup \mathcal{B}_2\cup\mathcal{A}_3\cup \mathcal{B}_3=\{0,1,3,4 \}$
and, for $b>0$,
$$\mathcal{A}_{2b+2}\cup \mathcal{B}_{2b+2}=
\left\{\binom{2b+1}{2} \right\}\cup
\left[\binom{2b+1}{2}+2,
\binom{2b+2}{2}\right]\cup
\left\{ \binom{2b+2}{2}+2 \right\}$$
and
$$\mathcal{A}_{2b+3}\cup \mathcal{B}_{2b+3}=
\left\{\binom{2b+2}{2} \right\}\cup
\left[\binom{2b+2}{2}+2,
\binom{2b+3}{2}+1\right].$$

Suppose $v$ odd and write $v=2u+5$ with $u\geqslant 0$.
Taking $a\in \{2b+2,2b+3\}$, condition $2\leqslant a \leqslant v-2$ becomes $0\leqslant b\leqslant u $. 
Hence, 
$$\begin{array}{rcl}
\Theta_{2u+5}' & =& \bigcup\limits_{b=0}^{u}
\left(\mathcal{A}_{2b+2}\cup \mathcal{B}_{2b+2} \cup \mathcal{A}_{2b+3}\cup 
\mathcal{B}_{2b+3}\right)\\[2pt]
& =& \{0,1,3,4 \}\cup
\bigcup\limits_{b=1}^{u}
\left(\left[\binom{2b+1}{2},\binom{2b+3}{2}+1\right] \setminus\left\{ \binom{2b+1}{2}+1, 
\binom{2b+2}{2}+1\right\} \right)\\[2pt]
 &=& \left[0,\binom{v-2}{2}+1  \right]
 \setminus  \left\{\binom{2j+2}{2}+1: j \in [0,u] \right\}.
\end{array}$$ 
Note that  $u=\frac{v-5}{2}=\left\lfloor \frac{v-4}{2}\right\rfloor$.

Suppose $v$ even and write $v=2u+6$ with $u\geqslant 0$.
Condition $2\leqslant a \leqslant v-2$ becomes $0\leqslant b\leqslant u+1$ for $a=2b+2$ and $0\leqslant b\leqslant u $ for $a=2b+3$.
Hence, 
$$\begin{array}{rcl}
\Theta_{2u+6}' & =& \bigcup\limits_{b=0}^{u}
\left(\mathcal{A}_{2b+2}\cup \mathcal{B}_{2b+2} \cup \mathcal{A}_{2b+3}\cup 
\mathcal{B}_{2b+3}\right)
\cup \left( \mathcal{A}_{2(u+1)+2}\cup \mathcal{B}_{2(u+1)+2} \right)\\[2pt]
&& \cup \left\{\binom{2u+3}{2} \right\}\cup
\left[\binom{2u+3}{2}+2,
\binom{2u+4}{2}\right]\cup
\left\{ \binom{2u+4}{2}+2 \right\}
\\[2pt]
& =& \{0,1,3,4 \}\cup
\bigcup\limits_{b=1}^{u+2}
\left(\left[\binom{2b+1}{2},\binom{2b+3}{2}+1\right] \setminus\left\{ \binom{2b+1}{2}+1, 
\binom{2b+2}{2}+1\right\} \right)\\[2pt]
&& \cup \left\{\binom{v-3}{2} \right\}\cup
\left[\binom{v-3}{2}+2,
\binom{v-2}{2}\right]\cup
\left\{ \binom{v-2}{2}+2 \right\}
\\[2pt]
 &=& \left[0,\binom{v-2}{2}+2  \right]
 \setminus  \left\{\binom{2j+2}{2}+1: j \in [0,u+1] \right\}.
\end{array}$$
Note that  $u+1=\frac{v-4}{2}
=\left\lfloor \frac{v-4}{2}\right\rfloor$.

Now, for any $v\geqslant 6$ and any $j \in \left[0,\left\lfloor\frac{v-6}{2}\right\rfloor\right]$ take the canonical tree $T_j$ associated to the partition 
$$\alpha_j=\left(2j+2, 2, 1^{v-6- 2j} \right)$$
of $v-2$.
Then $\c(T_j)=\bm{c}(\alpha_j)= \binom{2j+2}{2}+ 1+(v-2)$. 
Writing 
$$\Upsilon_v=\left\{
\c(T_j): j \in \left[0,\left\lfloor\frac{v-4}{2}\right\rfloor-1\right]
\right\},$$
we have
$\Theta_{2u+5}\cup \Upsilon_{2u+5}
=\left[0,\binom{v-1}{2}+1 \right] \setminus\left\{
\binom{v-2}{2}+2
\right\}$
and
$\Theta_{2u+6}\cup \Upsilon_{2u+6}=\left[0,\binom{v-1}{2} \right]$.

Finally, the statement follows by considering, for $v\geqslant 5$ odd, the simple graph $L_v$  whose vertex set is  $[1,v]$ and whose edge set is: 
$$E_{v-4}\cup \{ \{1,v\}\} \cup \left\{ \{2j,2j+1\}: j \in \left[1,\frac{v-5}{2}\right] \right\}. $$
In this way, $\c(L_v)=\binom{v-3}{2} + 4 + v-5 =  \binom{v-2}{2} + 2$.
\end{proof}

Now, we prove some results linking the connectedness of a graph $G$ and  the values $\c(G)$ and $\Delta(G)$. The first lemma is a sort of reverse of 
Proposition~\ref{v-2}.

\begin{lem}\label{connesso}
Assume $v\geqslant 4$. Let $G\in \mathcal{Q}_v$  be such that
$\c(G) > \binom{v-1}{2}$.
Then $G$ is connected.
\end{lem}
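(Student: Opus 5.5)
We argue by contraposition: assuming $G\in\mathcal{Q}_v$ is disconnected, we show $\c(G)\leqslant\binom{v-1}{2}$. The key observation is that $\c$ is additive over connected components. A pair of vertices lying in different components has no common neighbour, so if $G$ has components $G_1,\ldots,G_t$ on $v_1,\ldots,v_t$ vertices, then $\c(G)=\sum_i \c(G_i)$. Each component is again $C_4$-free, and trivially $\c(G_i)\leqslant\binom{v_i}{2}$. Since $\binom{\cdot}{2}$ is superadditive, we may merge all components but one into a single block. Concretely, $\c(G)\leqslant \c(G_1)+\c(H)$ with $H=G-V(G_1)$, and we may bound $\c(H)\leqslant\binom{v-v_1}{2}$ regardless of its connectivity.

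This crude bound is not enough, because $\binom{v_1}{2}+\binom{v-v_1}{2}$ equals $\binom{v-1}{2}$ when $v_1=1$ but may be larger otherwise. We therefore need the sharper bound for $C_4$-free graphs given by Theorem~\ref{friendship}. A graph $H\in\mathcal{Q}_n$ satisfies $\c(H)=\binom{n}{2}$ only if it is the friendship graph, and in every case $\c(H)\leqslant\binom{n}{2}$. A simple upper estimate uses equation \eqref{cd} together with $C_4$-freeness. Every pair of vertices is the endpoint pair of at most one cherry, so $\sum_x\binom{d_x}{2}\leqslant\binom{n}{2}$; this only reproduces the trivial bound. We will instead split according to the component sizes.

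Write $n_1=v_1\leqslant n_2=v-v_1$, so that $n_1+n_2=v$ and $n_1\geqslant1$.

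\emph{Case $n_1=1$.} The isolated vertex contributes nothing, so $\c(G)=\c(H)\leqslant\binom{v-1}{2}$ with $H\in\mathcal{Q}_{v-1}$, and we are done.

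\emph{Case $n_1\geqslant 2$.} We need
$$\binom{n_1}{2}+\binom{n_2}{2}\leqslant\binom{v-1}{2}=\binom{n_1+n_2-1}{2}.$$
The difference equals $\binom{n_1+n_2-1}{2}-\binom{n_1}{2}-\binom{n_2}{2}=(n_1-1)(n_2-1)-1$. This is nonnegative whenever $n_1,n_2\geqslant2$, and it vanishes exactly when $n_1=n_2=2$, i.e.\ $v=4$. So in this case the crude bound $\c(G_i)\leqslant\binom{v_i}{2}$ already suffices.

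It remains to check that the merging step is legitimate. We may take $G_1$ to be a smallest component and let $H$ be the union of the rest. If $G_1$ is a single vertex, this is the first case. Otherwise both $G_1$ and $H$ have at least $2$ vertices, and the second case applies. In both cases $\c(G)\leqslant\binom{v-1}{2}$, which contradicts the hypothesis, so $G$ must be connected.

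The main obstacle is not the arithmetic. It is justifying the additivity $\c(G)=\c(G_1)+\c(H)$, that is, that a pair of vertices in different components has co-degree $0$. This follows immediately because any common neighbour would lie in both components. With this in hand the proof is short, and the hypothesis $v\geqslant4$ only ensures that the statement is non-vacuous.
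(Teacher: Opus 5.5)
Your argument is correct and is essentially the paper's. Only pairs inside a single component can have a common neighbour, so $\c(G)\leqslant\binom{v}{2}-n_1n_2=\binom{n_1}{2}+\binom{n_2}{2}$, which is a two-block version of the paper's count of the $Z$ cross-component zeros. Your case $n_1=1$ also correctly handles an isolated vertex plus one other component, where the paper's asserted $Z\geqslant v$ actually fails but $Z=v-1$ still suffices.

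One arithmetic correction: $\binom{v-1}{2}-\binom{n_1}{2}-\binom{n_2}{2}=(n_1-1)(n_2-1)$, not $(n_1-1)(n_2-1)-1$. So the crude bound works for every $n_1\geqslant 1$, with equality only when $n_1=1$. Your claim that it ``may be larger otherwise'', the case split, and the detour through Theorem~\ref{friendship} are all unnecessary.
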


\begin{proof}
Let $A_1,\ldots,A_\ell$ be the connected components of $G$; set  $a_i=|A_i|$, for each $i=1,\ldots,\ell$.
For the sake of contradiction, suppose $\ell\geqslant 2$.
For each pair of vertices $x,y$ such that $x \in A_i$ and $y \in A_j$ with $i\neq j$ we have $c_{x,y}=0$. So, the sequence $\ga(G)$ contains at least $Z:=\sum\limits_{i=1}^{\ell-1} \sum\limits_{j=i+1}^\ell a_ia_j$ zeros.
Now, $Z\geqslant \sum\limits_{i=1}^\ell a_i=v$ implies that $\c(G)\leqslant \binom{v}{2}-v=\binom{v-1}{2}-1$, a contradiction.
\end{proof}

\begin{lem}\label{binomiale}
Let $G=(V,E)$ be a graph in $\mathcal{Q}_v$. For every $x \in V$ one has
$$\binom{v-\deg_G(x)}{2} \geqslant \sum_{y \in V\setminus \{x\}} \binom{\deg_G(y)-1}{2}.$$
\end{lem}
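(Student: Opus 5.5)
The plan is a double count of pairs of vertices in a suitable set, using the $C_4$-free property twice.

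Fix $x\in V$, put $d=\deg_G(x)$, and let $W=V\setminus N_G(x)$. Then $|W|=v-d$; note that $x\in W$. The target is to bound the left-hand side below by something that fits inside $\binom{W}{2}$.

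First, take any $y\in V\setminus\{x\}$. Since $G\in\mathcal{Q}_v$, the co-degree $c_{x,y}$ is at most $1$, i.e. $|N_G(y)\cap N_G(x)|\leqslant 1$. Hence
\[
|N_G(y)\cap W|\geqslant \deg_G(y)-1 .
\]
This is where the possibility $x\in N_G(y)$ is harmless: $x$ belongs to $W$ and is simply counted there. Consequently the number of unordered pairs of vertices of $W$ having $y$ as a common neighbor is at least $\binom{\deg_G(y)-1}{2}$. If $\deg_G(y)\leqslant 2$ this bound is just $0$, so no case distinction is needed.

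Second, the $C_4$-free property again says that any two distinct vertices of $W$ have at most one common neighbor. So the sets of pairs $\binom{N_G(y)\cap W}{2}$, for $y\in V\setminus\{x\}$, are pairwise disjoint subsets of $\binom{W}{2}$. Summing over $y$ gives
\[
\sum_{y\in V\setminus\{x\}}\binom{\deg_G(y)-1}{2}\leqslant \sum_{y\neq x}\binom{|N_G(y)\cap W|}{2}\leqslant \binom{|W|}{2}=\binom{v-\deg_G(x)}{2},
\]
which is the claim.

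There is no serious obstacle. The only step that needs care is choosing $W$ to be the complement of the open neighborhood $N_G(x)$, so that $x$ lies in $W$. With this choice the count $|W|=v-d$ matches the statement exactly, and the neighbors $y$ of $x$ lose only one vertex (the unique common neighbor) rather than two.
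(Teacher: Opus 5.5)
Your proof is correct and is essentially the argument of Füredi's lemma, which the paper cites in place of a proof: each $y\neq x$ has at least $\deg_G(y)-1$ neighbors in $W=V\setminus N_G(x)$, and the $C_4$-free condition makes the sets $\binom{N_G(y)\cap W}{2}$ pairwise disjoint inside $\binom{W}{2}$. One small point: your remark that the bound is $0$ for $\deg_G(y)\leqslant 2$ uses the convention $\binom{-1}{2}=0$. If $\binom{t}{2}$ is read as $t(t-1)/2$, then an isolated $y\neq x$ contributes $1$; this is paid for by the pair $\{x,y\}\subseteq W$, which has no common neighbor and so lies outside all the sets $\binom{N_G(y')\cap W}{2}$.
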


\begin{proof}
The result can be obtained by following the proof of \cite[Lemma]{F}; see also \cite{MSE}.
\end{proof}

\begin{lem}\label{v-3}
Let $G \in \mathcal{Q}_v$, $v\geqslant 9$. If $\c(G)>\binom{v-1}{2}$, then $\Delta(G) \neq v-3$.
\end{lem}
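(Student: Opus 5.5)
We argue by contradiction and bound $\c(G)$ via \eqref{cd}. Suppose $x\in V$ has $\deg_G(x)=v-3$, and write $N=N_G(x)$, so $|N|=v-3$ and $V\setminus(N\cup\{x\})=\{u,w\}$. We then use the $C_4$-freeness of $G$ to force every vertex other than $x$ to have small degree.

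The structural facts are all immediate from $C_4$-freeness, since any two vertices have at most one common neighbour.
\begin{itemize}
\item[(i)] Each $y\in N$ has at most one neighbour inside $N$; otherwise $y$ and $x$ would share two neighbours. Hence $G[N]$ is a matching.
\item[(ii)] Each of $u,w$ has at most one neighbour in $N$; otherwise it would share two neighbours with $x$. Hence $\deg_G(u),\deg_G(w)\leqslant 2$, counting the possible edge $\{u,w\}$.
\item[(iii)] Let $e_y$ be the number of neighbours of $y\in N$ in $\{u,w\}$. Then $\deg_G(y)\leqslant 2+e_y$ and, by (ii), $\sum_{y\in N}e_y\leqslant 2$.
\end{itemize}

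Next we estimate $\sum_{y\in N}\binom{\deg_G(y)}{2}$. Without the $e_y$ terms each summand is at most $1$. The at most two extra edges to $\{u,w\}$ either raise two vertices to degree $3$, for a gain of at most $2+2$, or raise one vertex to degree $4$, for a gain of $6-1=5$. Hence
$$\sum_{y\in N}\binom{\deg_G(y)}{2}\leqslant (v-3)+5=v+2.$$
By (ii), $u$ and $w$ together contribute at most $2$. Therefore \eqref{cd} gives
$$\c(G)\leqslant \binom{v-3}{2}+v+4 .$$

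On the other hand,
$$\binom{v-1}{2}=\binom{v-3}{2}+(v-3)+(v-2)=\binom{v-3}{2}+2v-5 .$$
The hypothesis $\c(G)>\binom{v-1}{2}$ would therefore require $v+4>2v-5$, that is $v<9$. This contradicts $v\geqslant 9$, so $\Delta(G)\neq v-3$.

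The only delicate step is the case analysis in the bound on $\sum_{y\in N}\binom{\deg_G(y)}{2}$: we must check that the worst distribution of the two edges into $\{u,w\}$ is at most $+5$, and that (i)–(ii) already cover every configuration, including the case where $u\sim w$. Everything else is routine.
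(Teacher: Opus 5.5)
Your proof is correct. It follows the same overall strategy as the paper: fix $x$ of degree $v-3$, use $C_4$-freeness to bound the degrees of all other vertices, and plug into \eqref{cd}. Your final bound $\binom{v-3}{2}+v+4=\binom{v-1}{2}+9-v$ is exactly the paper's worst case (iii), which gives $v\leqslant 8$.

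The difference lies in how the case distinction is justified. The paper records only that each $y_i$ has degree at most $4$ and that at most two of them exceed degree $2$. It then invokes Lemma~\ref{binomiale}, the F\"uredi-type inequality $\binom{3}{2}\geqslant\sum_i\binom{\deg_G(y_i)-1}{2}$, to exclude configurations such as two vertices of degree $4$, or one of degree $4$ together with one of degree $3$. You instead count the edges between $N$ and $\{u,w\}$ directly, via $\sum_{y\in N} e_y\leqslant 2$. That count rules out those configurations on its own, and the convexity of $e\mapsto\binom{2+e}{2}$ shows that concentrating both extra edges at one vertex is the worst case.

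Your version is therefore more self-contained. It needs neither Lemma~\ref{binomiale} nor the paper's preliminary appeal to Lemma~\ref{connesso}, and it loses nothing, since the paper's final estimate is the same.
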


\begin{proof}
For the sake of contradiction, suppose there exists a graph $G=(V,E)$ in $\mathcal{Q}_v$ such that $\c(G)> \binom{v-1}{2}$ and $\Delta(G)=v-3$.
Then $G$ is connected by Lemma~\ref{connesso}.
Let $x$ be a vertex of $G$ having degree $v-3$: write $N_G(x)=\{y_1,\ldots ,y_{v-3}\}$ and $V\setminus N_G(x)=\{z_1,z_2\}$.

Since $G$ is a $C_4$-free graph, each vertex $y_i$ can be adjacent to at most one other vertex $y_{i'}$ and each vertex $z_j$ can be adjacent to at most one vertex $y_i$: hence, 
$y_1,\ldots,y_{v-3}$ have degree at most $4$, but at most two of them  can have degree greater than $2$;
also, $z_1,z_2$ have degree at most $2$. 

By Lemma~\ref{binomiale}, we have
$$\binom{3}{2}=\binom{v-\deg_G(x)}{2} \geqslant 
\sum_{i=1}^{v-3} \binom{\deg_G(y_i)-1}{2}
+\sum_{j=1}^{2} \binom{\deg_G(z_j)-1}{2}.$$ It follows that $\sum\limits_{i=1}^{v-3} \binom{\deg_G(y_i)-1}{2}\leqslant 3$. 
Then, we have the following cases: $(i)$ all the vertices $y_1,\ldots,y_{v-3}$ have degree at most $2$;
$(ii)$ one or two vertices, say  $y_1,y_2$, have degree $3$ and the vertices $y_{3},\ldots,y_{v-3}$ have degree at most $2$;
$(iii)$  one vertex, say $y_1$, has degree $4$ and the other vertices $y_2,\ldots,y_{v-3}$ have degree at most $2$.
On the other hand, we have $\c(G)=\binom{v-3}{2}+\sum\limits_{i=1}^{v-3} \binom{\deg_G(y_i)}{2}
+\sum\limits_{j=1}^{2} \binom{\deg_G(z_j)}{2}> \binom{v-1}{2}$ and this excludes case $(i)$. In case $(ii)$ we obtain
$$\binom{v-1}{2}+1\leqslant \c(G)\leqslant \binom{v-3}{2} + 2\binom{3}{2}+(v-3)\binom{2}{2} =
\binom{v-1}{2}+8-v,$$
which gives $v\leqslant 7$, an absurd.
Finally, in case $(iii)$
from $$\binom{v-1}{2}+1\leqslant \c(G)\leqslant \binom{v-3}{2} + \binom{4}{2}+(v-2)\binom{2}{2} =
\binom{v-1}{2}+9-v$$
we obtain $v\leqslant 8$, an absurd.
\end{proof}

\begin{lem}\label{deltavmezzi}
    Assume $v \geqslant 4$. Let $G\in \mathcal{Q}_v$ be such that $\c(G) > \binom{v-1}{2}$ and $\Delta(G) < v-3$. Then $\Delta(G) \leqslant\frac{v+1}{2}$.
\end{lem}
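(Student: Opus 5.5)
Let $x$ be a vertex with $d:=\deg_G(x)=\Delta(G)$, and suppose for contradiction that $d\geqslant \frac{v+2}{2}$. Since $\c(G)>\binom{v-1}{2}$, the sequence $\ga(G)$ has fewer than $\binom{v}{2}-\binom{v-1}{2}=v-1$ zeros. So it suffices to exhibit at least $v-1$ pairs of vertices with no common neighbor. Put $W=V\setminus(N_G(x)\cup\{x\})$ and $w=|W|=v-1-d$. The hypothesis $\Delta(G)<v-3$ gives $w\geqslant 3$, and $d\geqslant \frac{v+2}{2}$ gives $d\geqslant w+3$ up to parity. We will count zero co-degree pairs of the form $\{y,z\}$ with $y\in N_G(x)$ and $z\in W$.

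First record the structure forced by $C_4$-freeness, exactly as in the proof of Lemma~\ref{v-3}. Each $y\in N_G(x)$ has at most one neighbor in $N_G(x)$, and each $z\in W$ has at most one neighbor in $N_G(x)$; call it $u_z$ when it exists. Now fix $z\in W$ and let $y\in N_G(x)$ share a common neighbor $u$ with $z$. The vertex $u$ cannot be $x$. If $u\in N_G(x)$, then $u=u_z$ and $y$ is the unique $N_G(x)$-neighbor of $u_z$, which gives at most one such $y$. If $u\in W$, then $u$ is a neighbor of $z$ inside $W$ and $y=u_u$ is determined by $u$, which gives at most $\deg_W(z)\leqslant w-1$ such $y$. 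Hence $z$ has a common neighbor with at most $1+\deg_W(z)\leqslant w$ vertices of $N_G(x)$, and the number of zero pairs in $N_G(x)\times W$ is at least
$$\sum_{z\in W}\bigl(d-1-\deg_W(z)\bigr)= w(d-1)-2e(W),$$
where $e(W)$ is the number of edges inside $W$.

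Next we bound $e(W)$. The crude bound $\deg_W(z)\leqslant w-1$ gives $w(d-w)$ zero pairs. We then want $w(d-w)\geqslant d+w=v-1$, which, using $d\geqslant w+3$, amounts to an inequality of the form $3w\geqslant d+w$, i.e.\ $(w-1)d\geqslant w^2+w$. This holds once $d$ is moderately large compared with $w$. To cover the remaining range, use that $G[W]$ is itself $C_4$-free, so $e(W)$ is bounded far below $\binom{w}{2}$ (for example $e(W)\leqslant \frac{w}{4}(1+\sqrt{4w-3})$). We can also add further zero pairs: pairs $\{x,z\}$ with $z\in W$ not adjacent to any vertex of $N_G(x)$, and pairs inside $N_G(x)$ do not help since they all share $x$. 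If needed, we also invoke Lemma~\ref{binomiale} at $x$, namely $\binom{w+1}{2}\geqslant\sum_{y\neq x}\binom{\deg_G(y)-1}{2}$, which caps the degrees in $W$ and hence $e(W)$.

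The main obstacle is the boundary regime, where $d$ is only slightly above $\frac{v+1}{2}$ and $w\approx d$ (so $w\approx v/2$). There the crude count is not sufficient, and the argument must combine the sharper $C_4$-free bound on $e(W)$ with the degree restriction from Lemma~\ref{binomiale}. I would first try to obtain $2e(W)\leqslant w(d-2)-(v-1)$ directly from Lemma~\ref{binomiale} and the identity $\c(G)=\sum\binom{\deg_G(y)}{2}$. Any small values of $v$ that slip through would be checked by hand or by the computations of Table~\ref{vQv}.
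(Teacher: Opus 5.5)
Your setup and count are exactly the paper's. You fix $x$ of maximum degree. Every vertex of $W$ has at most one neighbor in $N_G(x)$, and every vertex of $N_G(x)$ has at most one neighbor in $N_G(x)$. From this you get that each $z\in W$ has co-degree $0$ with at least $d-w$ vertices of $N_G(x)$, hence at least $w(d-w)$ zeros in $\ga(G)$. In the paper's notation $\lambda=v-d=w+1$, this is precisely its bound $(\lambda-1)(v-2\lambda+1)$.

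The gap is that you never finish. You assert that this crude count fails in the boundary regime $d\approx w$, and you replace the last step by an unexecuted plan (sharper bounds on $e(W)$, Lemma~\ref{binomiale}, checks for small $v$). As written, the proposal does not prove the lemma, and the obstacle you describe is not real.

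The crude count already suffices. Under your standing assumptions, $w\geqslant 3$ (from $\Delta(G)<v-3$) and $d\geqslant w+3$ (from $d\geqslant\frac{v+2}{2}$; this is exact, there is no parity issue). Then
\[
w(d-w)-(v-1)=w(d-w)-(d+w)=(w-1)(d-w-2)-2\geqslant 2\cdot 1-2=0 .
\]
So $\ga(G)$ has at least $v-1$ zeros, i.e.\ $\c(G)\leqslant\binom{v}{2}-(v-1)=\binom{v-1}{2}$, a contradiction.

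Equivalently, your own reformulation $(w-1)d\geqslant w^2+w$ is increasing in $d$, and at the extreme value $d=w+3$ it reduces to $w\geqslant 3$. So the boundary case is exactly where the hypothesis $\Delta(G)<v-3$ enters, and the inequality holds there. What misled you is the intermediate condition $3w\geqslant d+w$: it comes from lossily replacing $d-w$ by $3$ and is not equivalent to $(w-1)d\geqslant w^2+w$.

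Drop your final paragraph. With this one-line verification your argument is complete and coincides with the paper's.
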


\begin{proof}
    Let $x \in V(G)$ be a vertex with $\deg_G(x)=\Delta(G)$.
    Write $\Delta(G)=v-\lambda$ and observe
    that $\Delta(G) < v-3$ implies $\lambda>3$ and hence $\frac{2}{\lambda-2} \leqslant 1$.

    Since $\Delta(G) \neq v-1$, the set $V(G)\setminus(N_G(x) \cup\{x\})$ contains a vertex $y$. By the $C_4$-free hypothesis, $y$ has at most one neighbor in $N_G(x)$ and has at most $\lambda-2$ neighbors in $V(G)\setminus(N_G(x) \cup\{x\})$. Moreover, every vertex in $N_G(x)$ has at most one neighbor in $N_G(x)$ itself.
    Then, for each $y \in V(G)\setminus(N_G(x) \cup\{x\})$, we have $|\{z\in N_G(x) : c_{z,y}=1\}| \leqslant 1+ \lambda -2 = \lambda-1$ and hence $|\{z\in N_G(x) : c_{z,y}=0\}| \geqslant v - \lambda - (\lambda-1)=v-2\lambda + 1$.

    It follows that $\ga(G)$ contains at least 
    $|V(G)\setminus(N_G(x) \cup\{x\})| \cdot (v-2\lambda + 1) = (\lambda-1)(v-2\lambda + 1)$ zeros.
     Hence, $\binom{v-1}{2}+1 \leqslant \c(G) \leqslant \binom{v}{2}-(\lambda-1)(v-2\lambda + 1)$, which gives
    $$(\lambda -2 )v \leqslant 2\lambda^2-3\lambda-1
    =(2\lambda+1)(\lambda-2)+1$$ and then
    $v\leqslant 2\lambda+1$. We conclude that 
    $\Delta(G) \leqslant \frac{v+1}{2}$. 
\end{proof}

\begin{prop}\label{propDelta}
    Assume $v \geqslant 4$. Let $G\in \mathcal{Q}_v$ be such that $\c(G) > \binom{v-1}{2}$ and $\Delta(G) < v-3$. 
    Then $\left\lceil \sqrt{v}\right\rceil \leqslant \Delta(G) \leqslant\frac{v+1}{2}$.
\end{prop}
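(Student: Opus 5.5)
The upper bound $\Delta(G)\leqslant\frac{v+1}{2}$ is exactly Lemma~\ref{deltavmezzi}, so only the lower bound $\Delta(G)\geqslant\lceil\sqrt v\rceil$ needs proof. Since $\Delta(G)$ is an integer, it suffices to show $\Delta(G)^2\geqslant v$, or at least $\Delta(G)>\sqrt{v}-1$ in a form that rounds up correctly. I would argue by contradiction: set $\Delta=\Delta(G)$ and suppose $\Delta\leqslant\lceil\sqrt v\rceil-1$, so that $\Delta<\sqrt v$, i.e. $\Delta^2<v$, i.e. $\Delta^2\leqslant v-1$.

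The natural tool is formula \eqref{cd} combined with the hypothesis $\c(G)>\binom{v-1}{2}$. From \eqref{cd} we get
$$\binom{v-1}{2}<\c(G)=\sum_{i=1}^v\binom{d_i}{2}\leqslant v\binom{\Delta}{2}=\frac{v\Delta(\Delta-1)}{2}.$$
Hence $(v-1)(v-2)<v\Delta(\Delta-1)$. If $\Delta^2\leqslant v-1$, then $\Delta(\Delta-1)<\Delta^2\leqslant v-1$, which gives $v\Delta(\Delta-1)<v(v-1)$. That alone does not beat $(v-1)(v-2)$, so I would sharpen it.

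Using $\Delta(\Delta-1)\leqslant (v-1)-\Delta$, we have $v\Delta(\Delta-1)\leqslant v(v-1-\Delta)$. It then suffices that $v(v-1-\Delta)\leqslant (v-1)(v-2)$, i.e. $v\Delta\geqslant 2v-2-v+\ldots$. Expanding gives $v^2-v-v\Delta\leqslant v^2-3v+2$, i.e. $v\Delta\geqslant 2v-2$, i.e. $\Delta\geqslant 2-2/v$. This holds for $\Delta\geqslant 2$. So I need $\Delta\geqslant 2$ separately, and it follows because $\Delta\leqslant1$ forces $\c(G)=0$, contradicting $\c(G)>\binom{v-1}{2}\geqslant 0$.

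Combining these, $\binom{v-1}{2}<\c(G)\leqslant\binom{v-1}{2}$, a contradiction. Therefore $\Delta^2\geqslant v$, and so $\Delta\geqslant\lceil\sqrt v\rceil$.

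The main obstacle is making the integer rounding tight. The crude bound $\c(G)\leqslant v\binom{\Delta}{2}$ only yields roughly $\Delta\gtrsim\sqrt v$, and one has to check that $\Delta^2\leqslant v-1$ really forces $\Delta(\Delta-1)\leqslant v-1-\Delta$. This is immediate since $\Delta(\Delta-1)=\Delta^2-\Delta$.

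Alternatively, Lemma~\ref{binomiale} or the $C_4$-free pair count $\sum_x\binom{d_x}{2}\leqslant\binom{v}{2}$ could be used, but \eqref{cd} with the hypothesis already suffices.
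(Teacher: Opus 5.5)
Your proof is correct and follows essentially the paper's route: the upper bound is taken directly from Lemma~\ref{deltavmezzi}, and the lower bound comes from combining $\binom{v-1}{2}<\c(G)\leqslant v\binom{\Delta(G)}{2}$. The only difference is in the rounding: the paper solves the resulting quadratic and rounds its root up, which relies on an unstated check such as $\frac12+\sqrt{v-\frac{11}{4}+\frac{4}{v}}\geqslant\sqrt v$ (tight at $v=4$), whereas your contradiction from $\Delta^2\leqslant v-1$ handles the integer step directly and a bit more cleanly.
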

\begin{proof}
From
$\c(G)\leqslant v\binom{\Delta(G)}{2}$ and $\c(G)\geqslant \binom{v-1}{2}+1$ we obtain the condition
$$\Delta(G)^2-\Delta(G)- \frac{v^2-3v+4}{v} \geqslant 0,$$
whence 
$\Delta(G)\geqslant \left\lceil\frac{1}{2}+\sqrt{\frac{4v^2-11v+16}{4v}}\right\rceil$ and we  conclude that $\Delta(G)\geqslant \left\lceil \sqrt{v}\right\rceil$.

The statement now follows from Lemma \ref{deltavmezzi}.
\end{proof}

\begin{prop}\label{planare}
 Assume $v\geqslant 17$.  Let $G=(V,E)$ be a \emph{planar} graph in  $\mathcal{Q}_v$. If $\c(G)> \binom{v-1}{2}$, then $\Delta(G)\geqslant v-2$. 
\end{prop}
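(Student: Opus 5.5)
The plan is to argue by contradiction. Suppose $G$ is planar, $G\in\mathcal{Q}_v$ with $v\geqslant 17$, $\c(G)>\binom{v-1}{2}$ and $\Delta(G)\leqslant v-3$. Since $v\geqslant 9$, Lemma~\ref{v-3} excludes $\Delta(G)=v-3$, so $\Delta(G)<v-3$. Then Proposition~\ref{propDelta} gives
$$\left\lceil\sqrt v\right\rceil\leqslant \Delta(G)\leqslant \frac{v+1}{2},$$
and Lemma~\ref{connesso} shows that $G$ is connected. The only structural input still needed is an upper bound on the number of edges $e$ coming from planarity together with $C_4$-freeness. The crude bound $e\leqslant 3v-6$ is too weak for $v$ as small as $17$, so I will use the sharper bound $e\leqslant \frac{15}{7}(v-2)$.

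To prove this edge bound, I count faces in a plane embedding. Two triangular faces cannot share an edge, since their union would contain a $4$-cycle. Every other face has length at least $5$, because faces of length $4$ are excluded. Let $f_3$ and $f_{\geqslant5}$ be the numbers of triangular and longer faces. Then $3f_3\leqslant e$ and $3f_3+5f_{\geqslant 5}\leqslant 2e$. These give $f\leqslant \frac{e}{3}+\frac{e}{5}=\frac{8e}{15}$, and Euler's formula then yields $e\leqslant \frac{15}{7}(v-2)$. Small degenerate cases (trees, faces whose boundary is not a cycle) are handled separately; trees already satisfy $\c\leqslant\binom{v-1}{2}$ by Theorem~\ref{Mic}.

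The core estimate fixes a vertex $x$ of degree $\Delta=\Delta(G)$ and applies Lemma~\ref{binomiale}, using $\binom{d}{2}=\binom{d-1}{2}+(d-1)$:
$$\c(G)=\binom{\Delta}{2}+\sum_{y\neq x}\binom{\deg_G(y)}{2}\leqslant \binom{\Delta}{2}+\binom{v-\Delta}{2}+(2e-\Delta)-(v-1).$$
Inserting $e\leqslant\frac{15}{7}(v-2)$, the right-hand side becomes a convex function of $\Delta$. On $[\lceil\sqrt v\rceil,\frac{v+1}{2}]$ it is therefore maximized at the left endpoint $\Delta=\lceil\sqrt v\rceil=:s$, so
$$\c(G)\leqslant \binom{s}{2}+\binom{v-s}{2}-s+\tfrac{30}{7}(v-2)-v+1.$$
The gap $\binom{v-1}{2}-\binom{v-s}{2}$ is roughly $(s-1)v$. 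This exceeds the error terms, which are about $\frac{23}{7}v+\frac{s^2}{2}$, once $s\geqslant 5$, that is, for $v\geqslant 17$. That contradicts $\c(G)>\binom{v-1}{2}$ and forces $\Delta(G)\geqslant v-2$.

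I expect the main obstacle to be the final numerical verification near the threshold, for $v$ between $17$ and about $25$, where $s=5$ and the margin is thin. There I would check the inequality exactly for each value of $s$ rather than asymptotically. If the margin fails for a few values, I would refine the bound using that $\sum_y(\deg_G(y)-1)$ is at most $2e-v$ rather than $2e$. If that is still not enough, I would sharpen the count of zeros as in Lemma~\ref{deltavmezzi}.
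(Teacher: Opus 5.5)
Your proposal is correct and is essentially the paper's proof. The key inequality is the same, $\mathrm{c}(G)\leqslant 2e-v+\binom{v-d}{2}+\binom{d-1}{2}$ for a vertex of degree $d$, obtained from Lemma~\ref{binomiale} and combined with $e\leqslant\frac{15}{7}(v-2)$. The paper cites that edge bound from Dowden, whereas you re-derive it by face counting. The only structural difference is where the inequality is applied: the paper uses it at every vertex to get $d\leqslant 4$ or $d\geqslant v-3$, then finishes with $\mathrm{c}(G)\leqslant 6v$ and Lemma~\ref{v-3}, while you use it only at a maximum-degree vertex after restricting $\Delta(G)$ via Lemma~\ref{v-3} and Proposition~\ref{propDelta}.

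Your worry about the threshold is unfounded. The needed inequality is exactly $s(v+1-s)\geqslant\frac{30}{7}(v-2)$, and $s(v+1-s)\geqslant 5(v-4)>\frac{30}{7}(v-2)$ holds for every $v\geqslant 17$ and $5\leqslant s\leqslant v-4$.
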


\begin{proof}
Denoting $e=|E(G)|$, we have
$$\c(G)=\sum_{y \in V} \binom{\deg_G(y)}{2} =
\sum_{y \in V} \left(\binom{\deg_G(y)-1}{2} + (\deg_G(y) -1)\right),$$
whence
$$2e = \c(G)+v  -\sum_{y \in V} \binom{\deg_G(y)-1}{2}.$$   

Now, fix a vertex $x\in V$ of degree $d$. By Lemma~\ref{binomiale}, it follows that $2e\geqslant \c(G) +v - \binom{v-d}{2}-\binom{d-1}{2}$ and hence
$$2e> \binom{v-1}{2} +v - \binom{v-d}{2}-\binom{d-1}{2}=d(v+1)-d^2.$$

Since $G$ is a $C_4$-free  planar graph, by \cite[Theorem 1]{D} we have $e\leqslant \frac{15}{7}(v-2)$. This implies $d(v+1)-d^2 < \frac{30}{7}(v-2)$, that is $7d^2 -7(v+1)d+ 30(v-2)>0$.
This gives
$$ d \leqslant  \left\lfloor \frac{7(v+1)-\sqrt{49v^2-742v+1729}}{14}\right\rfloor \quad \text{or}\quad  d\geqslant  \left\lceil \frac{7(v+1)+\sqrt{49v^2-742v+1729}}{14}\right\rceil. $$
Since $v\geqslant 17$ we have $(7v-62)^2< 49v^2-742v+1729< (7v-49)^2$, whence
$$4\leqslant  \left\lfloor \frac{7(v+1)-\sqrt{49v^2-742v+1729}}{14}\right\rfloor\leqslant 5- \frac{1}{14}$$
and 
$$(v-4) +\frac{1}{14} \leqslant  \left\lceil\frac{7(v+1)+\sqrt{49v^2-742v+1729}}{14}\right\rceil\leqslant  v-3.$$
This means that either $d\leqslant 4$ or $d\geqslant v-3$.

So, suppose that $\deg_G(x)\leqslant 4$ for all $x \in V$. Then $\binom{v-1}{2} < \c(G)\leqslant\binom{4}{2}v$ gives the inequality $v^2-15v+2<0$, whence the absurd $v \leqslant 14$. 
We conclude that there is at least one vertex $x$ such that $\deg_G(x)\geqslant v-3$. The statement now follows from Lemma~\ref{v-3}.
\end{proof}

\begin{cor}\label{utopia}
Assume $v\geqslant 5$. Let $G\in \mathcal{Q}_v$  be such that
$\c(G) > \binom{v-1}{2}$.
Then one of the following cases occurs:
\begin{itemize}
    \item[(1)] $\Delta(G)=4$, $v=7$ and 
    $G$ is one of the graphs shown in Figure~\ref{7};
    \item[(2)] $\Delta(G)=4$, $v=8$ and $G$ is the graph shown in Figure~\ref{22};
    \item[(3)] $\Delta(G)\geqslant v-2$ (hence, $G$ is planar);
    \item[(4)] 
    $v\geqslant 17$, $\left\lceil \sqrt{v}\right\rceil \leqslant \Delta(G) \leqslant \frac{v+1}{2}$ and $G$ is not planar.
\end{itemize}
\end{cor}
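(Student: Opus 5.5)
The plan is to split according to the value of $\Delta(G)$, using the results already established. By Lemma~\ref{connesso}, $G$ is connected. We may assume $\Delta(G)\leqslant v-3$, since otherwise we are in case (3). For case (3) itself we must also check planarity. If $\Delta(G)=v-1$, the $C_4$-free condition makes $G$ a subgraph of the friendship graph, namely a star plus a matching on the leaves, which is planar. If $\Delta(G)=v-2$, let $x$ have degree $v-2$ and let $z$ be the remaining vertex. Each neighbor of $x$ has at most one neighbor in $N_G(x)$, and $z$ has at most one neighbor in $N_G(x)$. So $G$ is a star plus a matching on $N_G(x)$ plus a pendant or isolated vertex $z$, and this is clearly planar.

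Now suppose $\Delta(G)\leqslant v-3$.

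\begin{itemize}
\item \textbf{Large $v$ ($v\geqslant 17$).} If $G$ were planar, Proposition~\ref{planare} would give $\Delta(G)\geqslant v-2$, a contradiction; so $G$ is non-planar. Lemma~\ref{v-3} excludes $\Delta(G)=v-3$, hence $\Delta(G)<v-3$, and Proposition~\ref{propDelta} gives $\lceil\sqrt v\rceil\leqslant\Delta(G)\leqslant\frac{v+1}{2}$. This is case (4).

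\item \textbf{Small $v$ ($5\leqslant v\leqslant 16$).} Here we must show that $\Delta(G)\leqslant v-3$ forces $v\in\{7,8\}$, $\Delta(G)=4$, and $G$ one of the listed graphs. For $9\leqslant v\leqslant 16$, Lemma~\ref{v-3} again gives $\Delta(G)<v-3$. Then Proposition~\ref{propDelta} yields $\lceil\sqrt v\rceil\leqslant\Delta\leqslant\frac{v+1}{2}$. We combine this with Lemma~\ref{binomiale} applied at a vertex of maximum degree, and with $\c(G)=\sum\binom{d_i}{2}\geqslant\binom{v-1}{2}+1$, to obtain a finite list of candidate degree sequences. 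The case $v\leqslant 8$ with $\Delta=v-3$ is handled directly through the case analysis in the proof of Lemma~\ref{v-3}. Its bounds $v\leqslant 7$ (case (ii)) and $v\leqslant 8$ (case (iii)) leave exactly the possibilities $\Delta=4$ with $v=7$ or $v=8$. The remaining small cases $\Delta<v-3$ with $v\leqslant 8$ give $\Delta\leqslant 3$, and then $v\binom{3}{2}=3v<\binom{v-1}{2}+1$ fails only for small $v$, so they are few.
\end{itemize}

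The main obstacle is the range $9\leqslant v\leqslant 16$: proving that no graph there satisfies $\c(G)>\binom{v-1}{2}$ with $\Delta\leqslant v-3$. The counting inequalities alone are probably not sharp enough. I would therefore first try to rule these cases out using the exhaustive \textsc{magma}/\textsc{nauty} enumeration already used for Table~\ref{vQv}. It lists all of $\mathcal{Q}_v$ for $v\leqslant 16$, so one simply filters for $\c(G)>\binom{v-1}{2}$ and inspects $\Delta(G)$. The same enumeration identifies the exceptional graphs for $v=7,8$, which are the graphs in Figures~\ref{7} and \ref{22}.

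If a computer-free argument is preferred, I would refine the zero-counting argument in the proof of Lemma~\ref{deltavmezzi}. The idea is to count zeros among the pairs of non-neighbors of $x$ as well, which should push the lower bound on $\Delta$ above the upper bound $\frac{v+1}{2}$ for $9\leqslant v\leqslant 16$.
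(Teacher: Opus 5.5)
Your proposal is correct and is essentially the paper's proof: the paper likewise delegates $5\leqslant v\leqslant 16$ to the \textsc{magma}/\textsc{nauty} enumeration and, for $v\geqslant 17$, combines Proposition~\ref{planare}, Lemma~\ref{v-3} and Proposition~\ref{propDelta} exactly as you do (your explicit check that $\Delta(G)\geqslant v-2$ forces planarity is a harmless addition). Drop your hand remarks on $v\leqslant 8$, though, since they are wrong and not needed: for $v=8$ the condition $\Delta=v-3$ means $\Delta=5$, not $4$; the graph of Figure~\ref{22} has $\Delta=4<v-3$, so ``$\Delta<v-3$ gives $\Delta\leqslant 3$'' fails for $v=8$; and the bounds in the proof of Lemma~\ref{v-3} are only necessary conditions, so identifying the exceptional graphs must be left to the enumeration.
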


\begin{proof}
The validity of the statement has been verified using \textsc{magma} and \textsc{nauty}  for $5\leqslant v\leqslant 16$.  
So, assume $v\geqslant 17$. If $G$ is planar, then $\Delta(G)\geqslant v-2$ by Proposition~\ref{planare}.
So, assume that $G$ is not planar. 
Then, $\Delta(G)\leqslant v-3$ and hence
$\left\lceil \sqrt{v}\right\rceil \leqslant \Delta(G) \leqslant(v+1)/2$ by Lemma~\ref{v-3} and Proposition~\ref{propDelta}.
\end{proof}

We conjecture that there is no graph satisfying case $(4)$ of the previous corollary.

\begin{figure}[htbp]
\centering

\begin{subfigure}{.45\linewidth}
\centering

\begin{tikzpicture}[scale = 0.4]

\coordinate (A) at (0,0);
\coordinate (B) at (0,2);
\coordinate (C) at (0,4);
\coordinate (D) at (3,4);
\coordinate (F) at (6,2);
\coordinate (G) at (2,0);
\coordinate (H) at (4,0);

\draw (F)--(H)--(G)--(A)--(B)--(C)--(D)--(F)--(D)--(B)--(G);

\draw[fill] (A) circle (3pt);
\draw[fill] (B) circle (3pt);
\draw[fill] (C) circle (3pt);
\draw[fill] (D) circle (3pt);
\draw[fill] (F) circle (3pt);
\draw[fill] (G) circle (3pt);
\draw[fill] (H) circle (3pt);
\end{tikzpicture}
\end{subfigure}
\quad
\begin{subfigure}{.45\linewidth}
\centering
\begin{tikzpicture}[scale = 0.4]
\centering

\coordinate (A) at (0,0);
\coordinate (B) at (4,0);
\coordinate (C) at (8,0);
\coordinate (D) at (12,0);
\coordinate (E) at (2,4);
\coordinate (F) at (6,4);
\coordinate (G) at (10,4);

\draw (A)--(B)--(C)--(D)--(G)--(C)--(F)--(B)--(E)--(A);

\draw[fill] (A) circle (3pt);
\draw[fill] (B) circle (3pt);
\draw[fill] (C) circle (3pt);
\draw[fill] (D) circle (3pt);
\draw[fill] (E) circle (3pt);
\draw[fill] (F) circle (3pt);
\draw[fill] (G) circle (3pt);
\end{tikzpicture}

\end{subfigure}

\caption{On the left, a graph $G_1 \in \mathcal{Q}_7$ with $\c(G_1)=16$ and $\Delta(G_1)=4$; on the right, 
a graph $G_2 \in \mathcal{Q}_7$ with $\c(G_2)=17$ and $\Delta(G_2)=4$.}\label{7}
\end{figure}

\begin{figure}[ht]
\centering

\begin{tikzpicture}[scale = 0.4]

\coordinate (A) at (0,0);
\coordinate (B) at (0,2);
\coordinate (C) at (0,4);
\coordinate (D) at (3,4);
\coordinate (E) at (6,4);
\coordinate (F) at (6,2);
\coordinate (G) at (2,0);
\coordinate (H) at (4,0);

\draw (F)--(H)--(G)--(A)--(B)--(C)--(D)--(E)--(F)--(D)--(B)--(G);

\draw[fill] (A) circle (3pt);
\draw[fill] (B) circle (3pt);
\draw[fill] (C) circle (3pt);
\draw[fill] (D) circle (3pt);
\draw[fill] (E) circle (3pt);
\draw[fill] (F) circle (3pt);
\draw[fill] (G) circle (3pt);
\draw[fill] (H) circle (3pt);
\end{tikzpicture}
\caption{A graph $G \in \mathcal{Q}_8$ with $\c(G)=22$ and $\Delta(G)=4$.}\label{22}
\end{figure}

\begin{thm}\label{mainPla}
Assume $v\geqslant 4$.
For every $0\leqslant c \leqslant \binom{v}{2}$, there exists a \emph{planar} graph $G \in \mathcal{Q}_v$ such that $c=\c(G)$, except when
\begin{itemize}
\item[$(1)$] $v=8$ and $c \in \{ 24, 26, 28 \}$;
    \item[$(2)$] $v\neq 8$ is even and 
    $$c\in \left\{\binom{v-1}{2}+1, \binom{v-1}{2}+3,\binom{v-1}{2}+5,\ldots, \binom{v}{2}  \right\};$$ 
        \item[$(3)$] $v$ is odd and 
    $$c\in \left\{\binom{v-1}{2}+3, \binom{v-1}{2}+5,\binom{v-1}{2}+7,\ldots, \binom{v}{2}-1  \right\}.$$ 
\end{itemize}
\end{thm}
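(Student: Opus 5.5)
The plan is to split the range $[0,\binom{v}{2}]$ into three zones and treat each separately.

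\emph{Low zone, $0\leqslant c\leqslant v-3$.} I would use disconnected planar $C_4$-free graphs: a path on $c+2$ vertices together with $v-c-2$ isolated vertices. This graph is planar and $C_4$-free, and by Proposition~\ref{v-2} its value is $c$.

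\emph{Middle zone, $v-2\leqslant c\leqslant \binom{v-1}{2}$.} Proposition~\ref{daisy} gives directly a connected planar graph for every such $c$. Its proof also yields more: the graphs $A_{a,r}$, $B_{a,s}$ and the trees $T_j$ cover $\binom{v-1}{2}+1$ when $v$ is odd, since $\Theta_{2u+5}\cup\Upsilon_{2u+5}$ reaches $\binom{v-1}{2}+1$. I will record this for later.

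\emph{High zone, $c>\binom{v-1}{2}$.} This is where the exceptional values live, and I would argue it in two parts.

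For $v\leqslant 16$ I would simply appeal to the computer verification. Table~\ref{vQv} already lists the missing values over all of $\mathcal{Q}_v$, and the graphs attaining the remaining values in this range can be checked to be planar. This check covers the sporadic case $v=8$, where Figure~\ref{22} supplies $c=22=\binom{7}{2}+1$ and $24,26,28$ are missing.

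For $v\geqslant 17$, Proposition~\ref{planare} (or Corollary~\ref{utopia}) forces $\Delta(G)\geqslant v-2$. It then suffices to classify the $C_4$-free graphs with a vertex $x$ of degree $v-1$ or $v-2$ and compute $\c(G)$ with \eqref{cd}.

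If $\deg_G(x)=v-1$, the $C_4$-free condition makes $N_G(x)$ induce a matching with $m\leqslant\lfloor (v-1)/2\rfloor$ edges. This gives
\[\c(G)=\binom{v-1}{2}+2m,\]
so every even offset $2m$ up to $2\lfloor (v-1)/2\rfloor$ is realized, by planar graphs (fans of triangles).

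If $\deg_G(x)=v-2$, let $z$ be the non-neighbour of $x$. Then $z$ has at most one neighbour $y_1\in N_G(x)$, and $N_G(x)$ again induces a matching of size $m$. I would split into three subcases.
\begin{itemize}
\item[(a)] $z$ is isolated. Then $\c(G)=\binom{v-2}{2}+2m\leqslant\binom{v-1}{2}$.
\item[(b)] $z\sim y_1$ with $y_1$ unmatched. Then $\c(G)=\binom{v-2}{2}+1+2m$ with $m\leqslant\lfloor (v-3)/2\rfloor$, which again is at most $\binom{v-1}{2}$.
\item[(c)] $z\sim y_1$ with $y_1$ matched. Then $\c(G)=\binom{v-2}{2}+2m+2$ with $m\leqslant\lfloor (v-2)/2\rfloor$. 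Using $\binom{v-1}{2}-\binom{v-2}{2}=v-2$, this exceeds $\binom{v-1}{2}$ only for $v$ even, and then only at $\binom{v-1}{2}+2$.
\end{itemize}

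Collecting the cases, every value above $\binom{v-1}{2}$ attained by a planar graph has even offset, with one exception for $v$ odd: the offset $1$, which comes from the middle-zone constructions noted above. The even offsets are attained up to $v-1$ for $v$ odd and up to $v-2$ for $v$ even. Since $\binom{v}{2}=\binom{v-1}{2}+v-1$, the missing values are exactly the odd offsets listed in items $(2)$ and $(3)$ of the statement.

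The main obstacle I expect is making the $\Delta\geqslant v-2$ case analysis airtight, in particular correctly accounting for pairs involving $z$ and the possible extra edge at $y_1$. For $v$ odd I must also confirm that offset $1$ really is attained, by a graph with $\Delta\leqslant v-3$ from Proposition~\ref{daisy}; this does not contradict Proposition~\ref{planare}, since that proposition only constrains graphs with $\c(G)>\binom{v-1}{2}$, and I should double-check that the value $\binom{v-1}{2}+1$ claimed there is genuinely covered by those constructions. Finally, the small cases $4\leqslant v\leqslant 16$ require a separate check that the realizing graphs found by computer are planar.
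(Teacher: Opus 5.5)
Your overall plan is the paper's. You use paths plus isolated vertices for $c\leqslant v-3$ and Proposition~\ref{daisy} for the middle range. For $c>\binom{v-1}{2}$ you reduce to $\Delta(G)\geqslant v-2$ via Proposition~\ref{planare} and then classify star-plus-matching graphs. The paper invokes Corollary~\ref{utopia} there, which packages the same computer check for small $v$ that you take from Table~\ref{vQv}.

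\textbf{Gap: subcase (c) is miscomputed, at exactly the point you flagged as uneasy.} For $v$ odd, $m\leqslant\lfloor (v-2)/2\rfloor=(v-3)/2$ gives $2m+2\leqslant v-1$. So $m=(v-3)/2$ yields
\[
\c(G)=\binom{v-2}{2}+v-1=\binom{v-1}{2}+1>\binom{v-1}{2}.
\]
This graph is $B_{v-2,\frac{v-5}{2}}$, the same graph that puts $\binom{v-1}{2}+1$ into $\Theta_v$ in the proof of Proposition~\ref{daisy}. It has $\Delta=v-2$, not $\Delta\leqslant v-3$. Your reconciliation also fails: $\binom{v-1}{2}+1$ exceeds $\binom{v-1}{2}$, so Proposition~\ref{planare} does apply to it. As written, three of your claims together would prove that offset $1$ is \emph{not} attained for odd $v\geqslant 17$: your subcase (c), the claimed exhaustiveness of the $\Delta\geqslant v-2$ analysis, and Proposition~\ref{planare}. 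That contradicts your own existence claim.

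\textbf{The fix is one line.} In (c) the offset is $\c(G)-\binom{v-1}{2}=2m+4-v$. This is even (at most $2$) for $v$ even, and at most $1$ for $v$ odd. Hence the non-existence of the listed odd offsets is unaffected, and offset $1$ for odd $v$ is realized inside the $\Delta=v-2$ case itself. Your argument then coincides with the paper's, whose only possibilities with $\c(G)>\binom{v-1}{2}$ and $\Delta=v-2$ are $B_{v-2,(v-5)/2}$ ($v$ odd, offset $1$) and $B_{v-2,(v-4)/2}$ ($v$ even, offset $2$).

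The remaining differences are cosmetic. The paper discards your subcase (a) via connectivity (Lemma~\ref{connesso}) rather than by direct computation. For small $v$, no separate planarity check of computer-found graphs is needed. Every non-exceptional value is already realized by the explicitly planar graphs: $D_{v,\ell}$, $B_{v-2,(v-5)/2}$ for odd $v$, and the graph of Figure~\ref{22} for $v=8$.
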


\begin{proof}
By Proposition~\ref{v-2}, if a graph $G \in \mathcal{Q}_v$ is such that
$\c(G)\in [0,v-3]$, then $G$ cannot be connected.
On the other hand, for all $\ell \in [2,v-1]$, take the graph $H_\ell$ consisting of $v-\ell$ isolated vertices and a path with $\ell$ vertices. Then, $H_\ell \in\mathcal{Q}_v$ and $\c(H_\ell)=\ell-2$.
For $c \in \left[v-2, \binom{v-1}{2}\right]$ the statement follows from Proposition~\ref{daisy}.

We now consider the case when $c > \binom{v-1}{2}$.
For any $\ell \in \left[0, \left\lfloor \frac{v-1}{2}\right\rfloor\right]$, 
let $D_{v,\ell}$ be the graph whose vertex set is  $[1,v]$ and whose edge set is 
$\{\{1,i\}: i \in [2,v]\}  \cup \{\{2j,2j+1\}:  j \in [1,\ell]\}$. 
Then $D_{v,\ell}\in \mathcal{Q}_v$ is connected and 
$\c(D_{v,\ell})=\binom{v-1}{2}+2\ell$.
Setting 
$$\mathcal{D}_{v}  = \left\{\c(D_{v,\ell}): \ell \in  \left[0, \left\lfloor \frac{v-1}{2}\right\rfloor\right]\right\},$$
if $v$ is even then
   $$
   \mathcal{D}_v  =\left\{\binom{v-1}{2}, \binom{v-1}{2}+2, \ldots, \binom{v}{2}-1  \right\};$$
if $v$ is odd then
   $$   \mathcal{D}_v = \left\{\binom{v-1}{2}, \binom{v-1}{2}+2, \ldots, \binom{v}{2} \right\}.$$
In particular,  one can verify that there is no graph
$G\in \mathcal{Q}_4$ such that $\c(G)\in \{4,6\}$ (see Figure~\ref{4}).

So, assume $v\geqslant 5$
and suppose that $G \in \mathcal{Q}_v$ is such that $\c(G)> \binom{v-1}{2}$.
We apply Corollary~\ref{utopia}:
if $v=7$ and $G$ is one of the graphs shown in Figure~\ref{7}, then $\c(G)\in \left\{ \binom{6}{2}+1, \binom{6}{2}+2 \right\}$;
if $v=8$ and $G$ is  the graph shown in Figure~\ref{22}, then $\c(G)=22=\binom{7}{2}+1$.
In the other cases, $\Delta(G)\geqslant v-2$.

If $\Delta(G)=v-1$, it is easy to see that $G$ must be a graph 
$D_{v,\ell}$, previously described.
If $\Delta(G)=v-2$, then we have two cases: either $G$ is a graph
$A_{v-2,r}$ for some $r \in \left[0,\left\lfloor \frac{v-3}{2}\right\rfloor\right]$ 
or a graph $B_{v-2,s}$ for 
some $s \in \left[0,\left\lfloor \frac{v-4}{2} \right\rfloor\right]$, graphs defined in the proof of Proposition~\ref{daisy}.
In the first case, $\c(A_{v-2,s})
=\binom{v-3}{2} + v-2 +2r > \binom{v-1}{2}$ implies $r> \frac{v-3}{2}$, a contradiction.
In the second case, $\c(B_{v-2,s})=\binom{v-3}{2}+v+1+2s > \binom{v-1}{2}$ implies $s > \frac{v-6}{2}$.
We conclude that the only possibilities are 
$G=B_{v-2,\frac{v-5}{2}}$ if $v$ is odd and 
$G=B_{v-2,\frac{v-4}{2}}$ if $v$ is even. In the first case we have
$\c(G)=\binom{v-1}{2}+1$;
in the second case, 
$\c(G)=\binom{v-1}{2}+2$.
\end{proof}


\section{Conclusions}\label{sec:conclusioni}

In this paper, we proposed a reconstruction problem for simple graphs
based on the concept of co-degree sequence. Our main investigation was focused on trees  (see Theorem~\ref{Mic}) and $C_4$-free graphs. 
In particular, 
Theorem~\ref{mainPla} describes which are the co-degree sequences for planar $C_4$-free graphs. We conjecture that the statement of this theorem holds also if we remove the hypothesis of planarity. In fact, we believe that case $(4)$ of Corollary~\ref{utopia} does not occur.

\section*{Acknowledgments}

Some of the results of this paper are part of first author's PhD thesis carried out under the supervision of Prof.~Anusch Taraz.
The second, third and fourth authors are members of INdAM-GNSAGA.

\end{document}